\newcommand{\D}{\,\mathrm{d}}
\theoremstyle{definition}
\newtheorem{Th}{Theorem}[section]
\newtheorem{Prop}[Th]{Proposition}
\newtheorem{Lem}[Th]{Lemma}
\newtheorem{Def}[Th]{Definition}
\theoremstyle{definition}
\newtheorem{rem}[Th]{Remark}
\begin{document}
\begin{center}\noindent
\textbf{\Large Special homogeneous surfaces}\\[2em]
{\fontseries{m}\fontfamily{cmss}\selectfont \large David Lindemann${}^\dagger$, Andrew Swann${}^{\dagger,\ddagger}$}\\[1em] 
{\small
${}^\dagger$Department of Mathematics and ${}^\ddagger$DIGIT, Aarhus University\\
Ny Munkegade 118, Bldg 1530, DK-8000 Aarhus C, Denmark\\
\texttt{david.lindemann@math.au.dk, swann@math.au.dk}
}
\end{center}
\vspace{1em}
\begin{abstract}
\noindent
We classify hyperbolic polynomials in two real variables that admit a transitive action on some component of their hyperbolic level sets. Such surfaces are called special homogeneous surfaces, and they are equipped with a natural Riemannian metric obtained by restricting the negative Hessian of their defining polynomial. Independent of the degree of the polynomials, there exist a finite number of special homogeneous surfaces. They are either flat, or have constant negative curvature.
\end{abstract}
\textbf{Keywords:} affine differential geometry, centro-affine surfaces, special real geometry, real algebraic surfaces, projective curves, homogeneous spaces\\
\textbf{MSC classification:} 53A15 (primary), 51N35, 14M17, 53C30, 53C26 (secondary)
\tableofcontents
\paragraph{Acknowledgements}\textcolor{white}{.}\\
David Lindemann is funded by a Walter Benjamin postdoc fellowship kindly granted by the \textit{German Research Foundation} (DFG) with title ``Applications of special real geometry in K\"ahler geometry''.
\section{Introduction and main results}
	The aim of this work is to classify all special homogeneous surfaces. A special homogeneous surface is a two-dimensional homogeneous space $\mathcal{H}$ that is contained in the level set $\{h=1\}$ of a homogeneous polynomial $h:\mathbb{R}^3\to\mathbb{R}$ of degree $\tau$ at least three, such that at any point $p\in\mathcal{H}$ the negative Hessian of $h$ at $p$, $-\partial^2h_p$, has Minkowski signature. Real homogeneous polynomials $h$ admitting such a point $p$ are called hyperbolic, and $p$ is called a hyperbolic point of $h$. In general dimension, hypersurfaces that are contained in
		\begin{equation*}
			\mathrm{hyp}_1(h):=\{h=1\}\cap\mathrm{hyp}(h),
		\end{equation*}
	where $\mathrm{hyp}(h)$ denotes the cone of hyperbolic points of $h$, are called generalised projective special real (GPSR) manifolds for $\tau\geq 4$, or simply projective special real (PSR) manifolds if $h$ is cubic \cite{L2}. The restriction of $-\frac{1}{\tau}\partial^2h$ to the tangent bundle of a (G)PSR manifold $\mathcal{H}\subset\{h=1\}$ is a Riemannian metric, which follows from the hyperbolicity of points in $\mathcal{H}$ and the homogeneity of $h$. For more details and an explanation of the scaling factor $\frac{1}{\tau}$ see \cite[Prop.\,1.3]{CNS}. The study of (G)PSR manifolds is motivated by both pure mathematics and theoretical physics, more specifically supergravity.
	
	When studying compact K\"ahler $\tau$-folds $X$, the volume function on the real $(1,1)$-cohomology,
		\begin{equation*}
			h:H^{1,1}(X,\mathbb{R})\to\mathbb{R},\quad [\omega]\mapsto\int\limits_X \omega^\tau,
		\end{equation*}
	is one tool in order to understand the geometry of the K\"ahler cone $\mathcal{K}\subset H^{1,1}(X,\mathbb{R})$ of $X$. The real homogeneous polynomial $h$ of degree $\tau$ has the property that every point $p$ in $\mathcal{K}$ is a hyperbolic point. The curvature of the hypersurfaces $\{h=1\}\cap\mathcal{K}$ has been studied in \cite{W} for certain Calabi-Yau $3$-folds. In general, it is not well understood which homogeneous polynomials $h$ can be realised in this manner. A reasonable approach is to try and find fitting K\"ahler manifolds for the subset of homogeneous polynomials that admit a transitive automorphism group. From this perspective, our work extends \cite{L5} in which special homogeneous curves have been classified.
	
	In 4+1-dimensional supergravity, level sets of hyperbolic cubic polynomials play the role of the target space of the vector multiplets. Such hypersurfaces are called projective special real (PSR) manifolds. Using the supergravity $r$- and $c$-map constructions \cite{CHM,DV,MS}, one can obtain explicit examples of projective special K\"ahler and quaternion K\"ahler manifolds \cite{CDJL}. In \cite{DV} homogeneous PSR manifolds have been completely classified, and their $r$- and $q=c\circ r$-map images are studied. For a general classification of all homogeneous polynomials, independent of the number of variables, that contain a homogeneous space in their level set $\{h=1\}$, such that every point in that space is hyperbolic, we currently lack the necessary technical tools. In our main Theorem \ref{thm_main} of this work we successfully solve this problem in dimension two.
	
	\begin{Th}\label{thm_main}
		Let $\mathcal{H}$ be a special homogeneous surface. Then $\mathcal{H}$ is contained in the level set $\{h=1\}$ of precisely one of the following hyperbolic polynomials $h:\mathbb{R}^3\to\mathbb{R}$ of degree $\tau\geq 3$:
			\begin{enumerate}[(i)]
				\item \label{thm_main_i} For $\tau$ even: $h=\left(x^2-y^2-z^2\right)^{\frac{\tau}{2}}$, $\mathrm{hyp}_1(h)$ has two isometric connected components, each homothetic to $\mathrm{SO}(2,1)^{+}/\mathrm{SO}(2)$. In particular, every hyperbolic point of $h$ has one-dimensional stabiliser. The automorphism group of $h$ is given by $\mathrm{O}(2,1)$.
				\item \label{thm_main_ii} For any $\tau$: $h=(x+z)^{\tau-2k}\left(x^2-y^2-z^2\right)^k$ for precisely one $k$ with $1\leq k<\frac{\tau}{2}$, $\mathrm{hyp}_1(h)$ has one connected component if $\tau$ is odd, and two isometric connected components for $\tau$ even. The identity component of the automorphism group $G^h_0$ of $h$ is isomorphic to the two-dimensional affine Lie group, the full group is $G^h\cong G^h_0\ltimes \mathbb{Z}_2$ for $\tau$ odd and $G^h\cong (G^h_0\times\mathbb{Z}_2)\ltimes\mathbb{Z}_2$ for $\tau$ even. In any of these cases the stabiliser of hyperbolic points is isomorphic to $\mathbb{Z}_2$.
				\item \label{thm_main_iii} For any $\tau$: $h=x^a y^b z^c$ for precisely one triple $(a,b,c)$ with $1\leq a\leq b\leq c\leq \tau-2$ and $a+b+c=\tau$. The set $\mathrm{hyp}_1(h)$ has $8$ isometric connected components for $a$, $b$, and $c$ even, and $4$ isometric connected components otherwise. The automorphism groups $G^h$ for different values of $a,b,c$ are listed in \eqref{eqn_Gh_flat_case_even} and \eqref{eqn_Gh_flat_case_odd}.
			\end{enumerate}
	\end{Th}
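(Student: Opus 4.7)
The plan is to exploit the transitivity of $G^h$ on $\mathcal{H}$ in order to first classify the identity component $G^h_0$ of the automorphism group and then recover $h$ from its invariants. Since $\mathcal{H}$ is two-dimensional and homogeneous, $\dim G^h_0 \geq 2$. At any $p \in \mathcal{H}$ the isotropy $H_p$ acts linearly on $T_p\mathcal{H}$ preserving the positive-definite inner product $-\frac{1}{\tau}\partial^2 h|_{T_p\mathcal{H}}$, so the identity component $H_p^0$ is either trivial or isomorphic to $\mathrm{SO}(2)$. Hence $\dim G^h_0 \in \{2,3\}$, and the three cases of the theorem correspond respectively to: case (i), with $\dim H_p^0 = 1$; case (ii), with $\dim H_p^0 = 0$ and $\mathfrak{g}^h$ non-abelian; and case (iii), with $\dim H_p^0 = 0$ and $\mathfrak{g}^h$ abelian.

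For case (i), $G^h_0$ is a three-dimensional closed connected subgroup of $\mathrm{GL}(3,\mathbb{R})$ with compact one-dimensional isotropy acting as $\mathrm{SO}(2)$ on $T_p\mathcal{H}$. Among such subgroups, up to conjugation, only $\mathrm{SO}(2,1)^+$ in its standard representation is compatible with a level hypersurface on which $-\partial^2 h$ has Minkowski signature: $\mathrm{SO}(3)$ would yield a definite Hessian, and the remaining three-dimensional candidates (such as Heisenberg or reducibly embedded $\mathrm{SL}(2,\mathbb{R})$) preserve no homogeneous polynomial of degree $\geq 3$ with a hyperbolic point. The ring of $\mathrm{SO}(2,1)$-invariant polynomials on $\mathbb{R}^3$ is generated by $Q = x^2 - y^2 - z^2$, forcing $\tau$ even and $h = Q^{\tau/2}$.

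For cases (ii) and (iii) one has $\dim \mathfrak{g}^h = 2$, and Lie's classification of two-dimensional real Lie algebras yields only $\mathfrak{aff}(1)$ or $\mathbb{R}^2$. In the non-abelian subcase, I would choose generators $X,Y$ with $[X,Y]=Y$ and put $X$ into real Jordan normal form; the relation $\mathrm{ad}_X Y = Y$ then restricts $Y$ to eigenvalue-$1$ entries of $\mathrm{ad}_X$, and, up to conjugation in $\mathrm{GL}(3,\mathbb{R})$, the pair $(X,Y)$ falls into a short list of embeddings. Requiring an open orbit at $p$, the invariance conditions $L_X h = L_Y h = 0$, and Minkowski signature of $\partial^2 h$ collapses this list to the single family of joint invariants $h = (x+z)^{\tau-2k}(x^2-y^2-z^2)^k$ for some $1 \leq k < \tau/2$. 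In the abelian subcase, commuting generators $X_1, X_2$ whose values at $p$ span $T_p\mathcal{H}$ cannot share a common generalised eigenvector with non-trivial nilpotent component, as this would produce a degenerate direction in the Hessian contradicting Minkowski signature. Hence $X_1,X_2$ are simultaneously diagonalisable over $\mathbb{R}$, and the $G^h_0$-invariant polynomials are monomials $x^a y^b z^c$ with $a+b+c = \tau$. Hyperbolicity forces every exponent to be strictly positive (otherwise $-\partial^2 h$ would have insufficient rank), and $1 \leq a \leq b \leq c$ is merely a normalisation by permutation of coordinates.

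Once $h$ is determined in each case, the remaining assertions — the full group $G^h$ including its discrete symmetries (Lorentz reflections, coordinate permutations and sign flips), the number and isometry class of connected components of $\mathrm{hyp}_1(h)$, and the exact stabiliser of a hyperbolic point — follow by direct computation from the explicit normal form of $h$. The main obstacle is the rigidity statement underlying the third paragraph: showing that every two-dimensional Lie subalgebra of $\mathfrak{gl}(3,\mathbb{R})$ admitting an open orbit on a hyperbolic surface is conjugate to exactly one of the normal forms above. This requires a careful case analysis combining the real Jordan normal forms of the semisimple generator with the Minkowski-signature constraint on $\partial^2 h$, which must be invoked repeatedly to rule out all but the listed families and to pin down the admissible ranges of the discrete parameters $k$ and $(a,b,c)$.
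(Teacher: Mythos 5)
Your overall strategy---splitting according to whether the isotropy at a hyperbolic point is one-dimensional or discrete, and in the discrete case according to whether the two-dimensional symmetry algebra is affine or abelian---is exactly the decomposition the paper uses, and the normal forms you expect to emerge are the correct ones. However, there is a genuine gap in your abelian subcase. From the exclusion of a common generalised eigenvector with non-trivial nilpotent part you conclude that the commuting generators $X_1,X_2$ are \emph{simultaneously diagonalisable over} $\mathbb{R}$. That implication is false: a pair of commuting semisimple real matrices can carry a complex-conjugate pair of eigenvalues, for instance $\mathrm{span}\{\mathrm{diag}(1,1,a),\,B\}$ with $B$ acting as an infinitesimal rotation on the $(x,y)$-plane and by a scalar $b<0$ on the $z$-axis. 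This family (the paper's $\mathfrak{g}_2$ in \eqref{eqn_g_1}) has no nilpotent component and is not ruled out by any pointwise rank or signature consideration; the paper must exclude it by a separate global argument: a suitable combination $B-\tfrac{b}{a}A$ generates an unbounded, non-periodic spiral orbit inside an affine plane, which meets every line of that plane in infinitely many points, forcing any invariant polynomial to be constant there and contradicting Lemma \ref{lem_no_rays}. Without this step your classification of the abelian case is incomplete, and the step as you state it would fail.

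Beyond that, the decisive content of the remaining cases is asserted rather than proved. In case (i) the claim that only $\mathrm{SO}(2,1)^{+}$ among three-dimensional subgroups with $\mathrm{SO}(2)$ isotropy preserves a hyperbolic polynomial of degree $\geq 3$ is precisely what must be established; the paper does this by fixing the rotation generator $A$, computing which $B$ close up to a three-dimensional subalgebra of $\mathfrak{gl}(3)$ (using the ``no rays'' and ``bounded away from the origin'' lemmas to discard the rest), reducing to $\mathfrak{so}(2,1)$ versus $\mathfrak{so}(3)$, excluding the latter by compactness of its orbits, and deriving the evenness of $\tau$ by restricting to $\{y=0\}$ and invoking the classification of special homogeneous curves. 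Likewise, in the non-abelian case the ``short list of embeddings'' and its collapse to $(x+z)^{\tau-2k}(x^2-y^2-z^2)^k$---including the exclusion of the rotation-type generator $a_6$ and the determination of the admissible weights $b(k,\ell)$---is where essentially all of the work lies, and your proposal defers it. So the plan is sound and parallels the paper's, but one intermediate claim is incorrect as stated and the main case analyses remain to be carried out.
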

	\noindent
	\begin{minipage}{0.32\textwidth}
		\begin{figure}[H]%
			\centering%
			\includegraphics[scale=0.2]{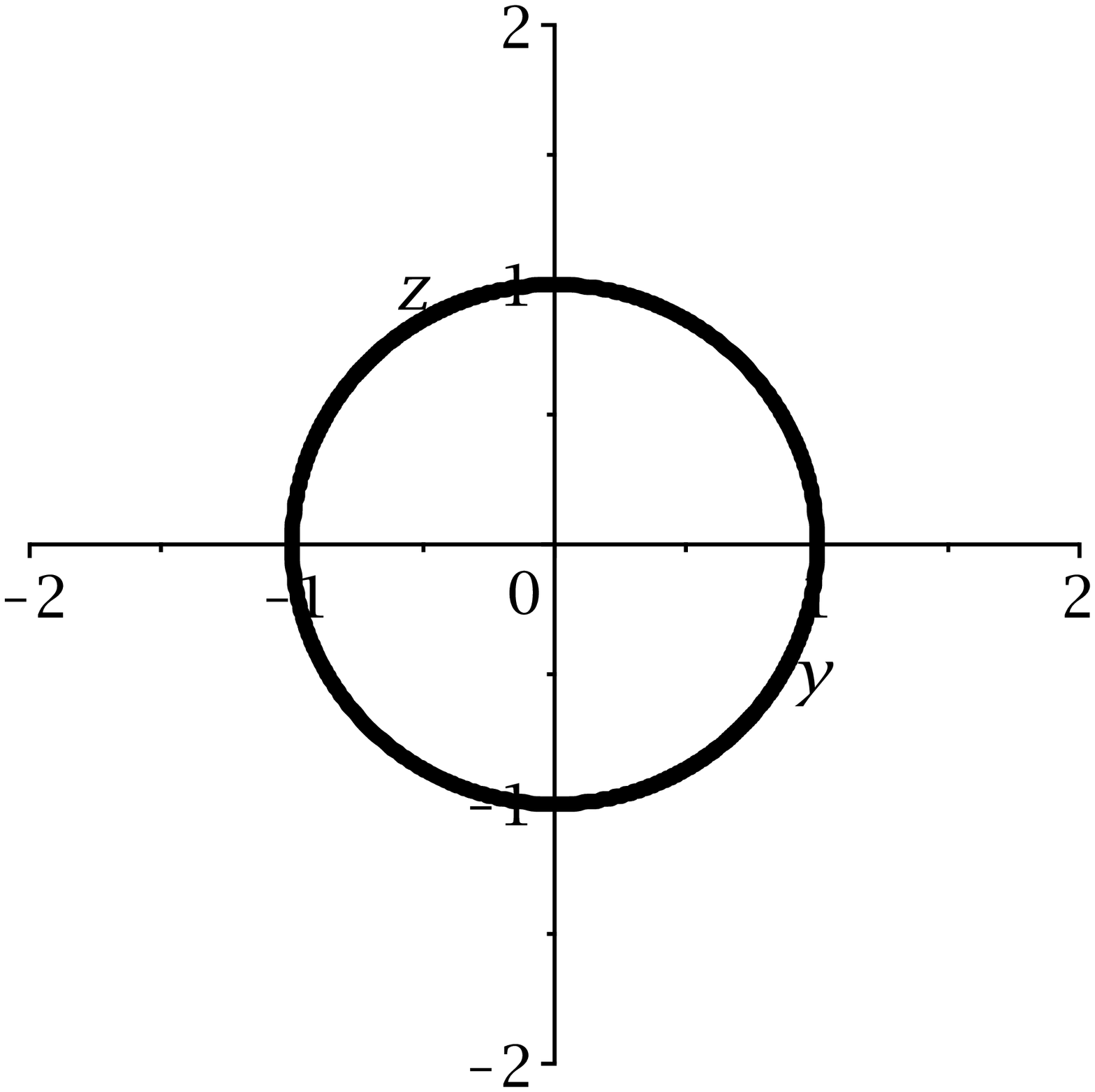}%
			\caption{The zero set $\{h=0\}$ of Thm. \ref{thm_main} \eqref{thm_main_i} on $\{x=1\}$.
			}
		\end{figure}
	\end{minipage}
	\begin{centering}
		\begin{minipage}{0.32\textwidth}
			\begin{figure}[H]%
				\centering%
				\includegraphics[scale=0.2]{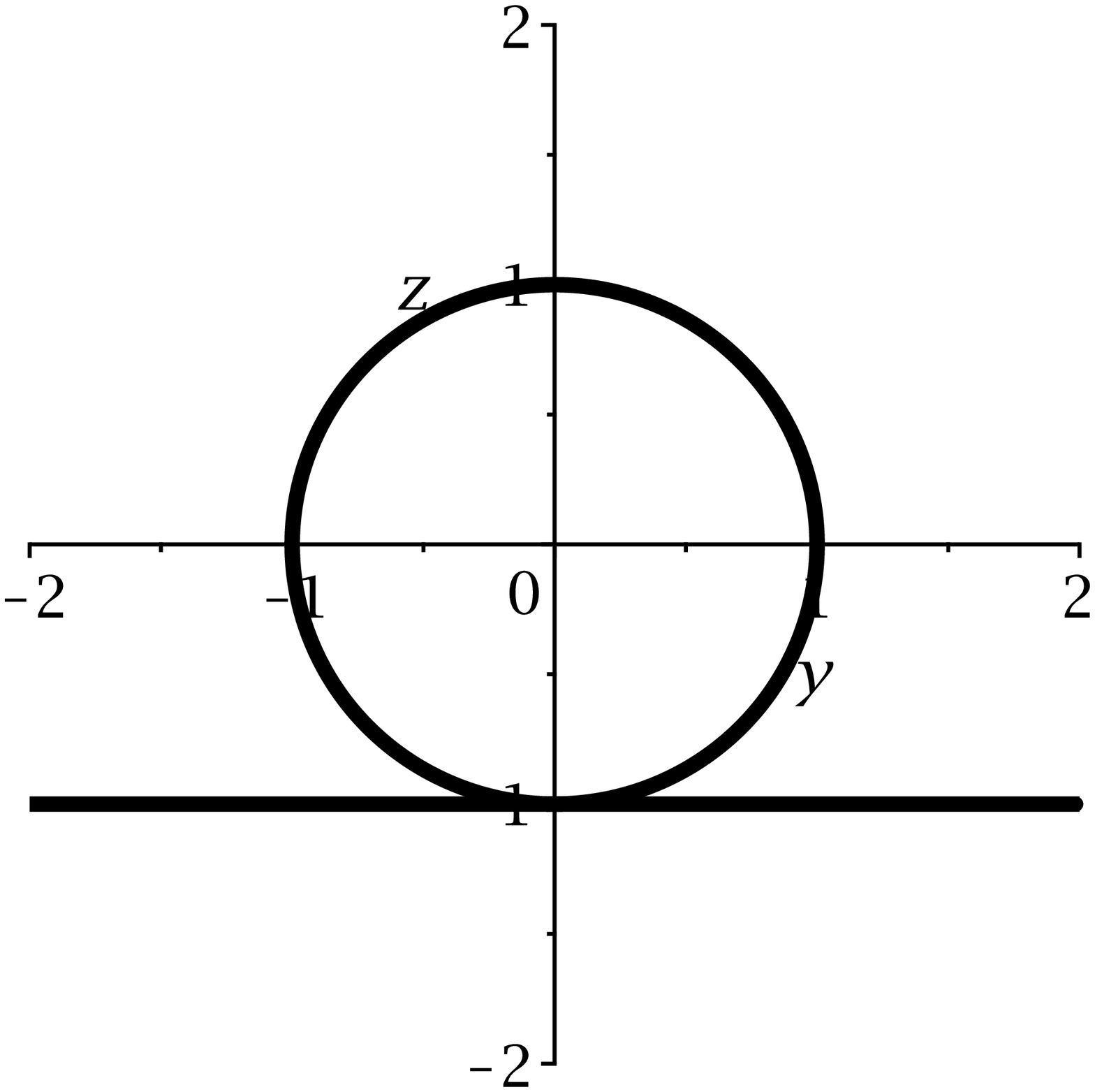}%
				\caption{The zero set $\{h=0\}$ of Thm. \ref{thm_main} \eqref{thm_main_ii} on $\{x=1\}$.
				}
			\end{figure}
		\end{minipage}
	\end{centering}
	\hfill
	\begin{minipage}{0.32\textwidth}
		\begin{figure}[H]%
			\centering%
			\includegraphics[scale=0.2]{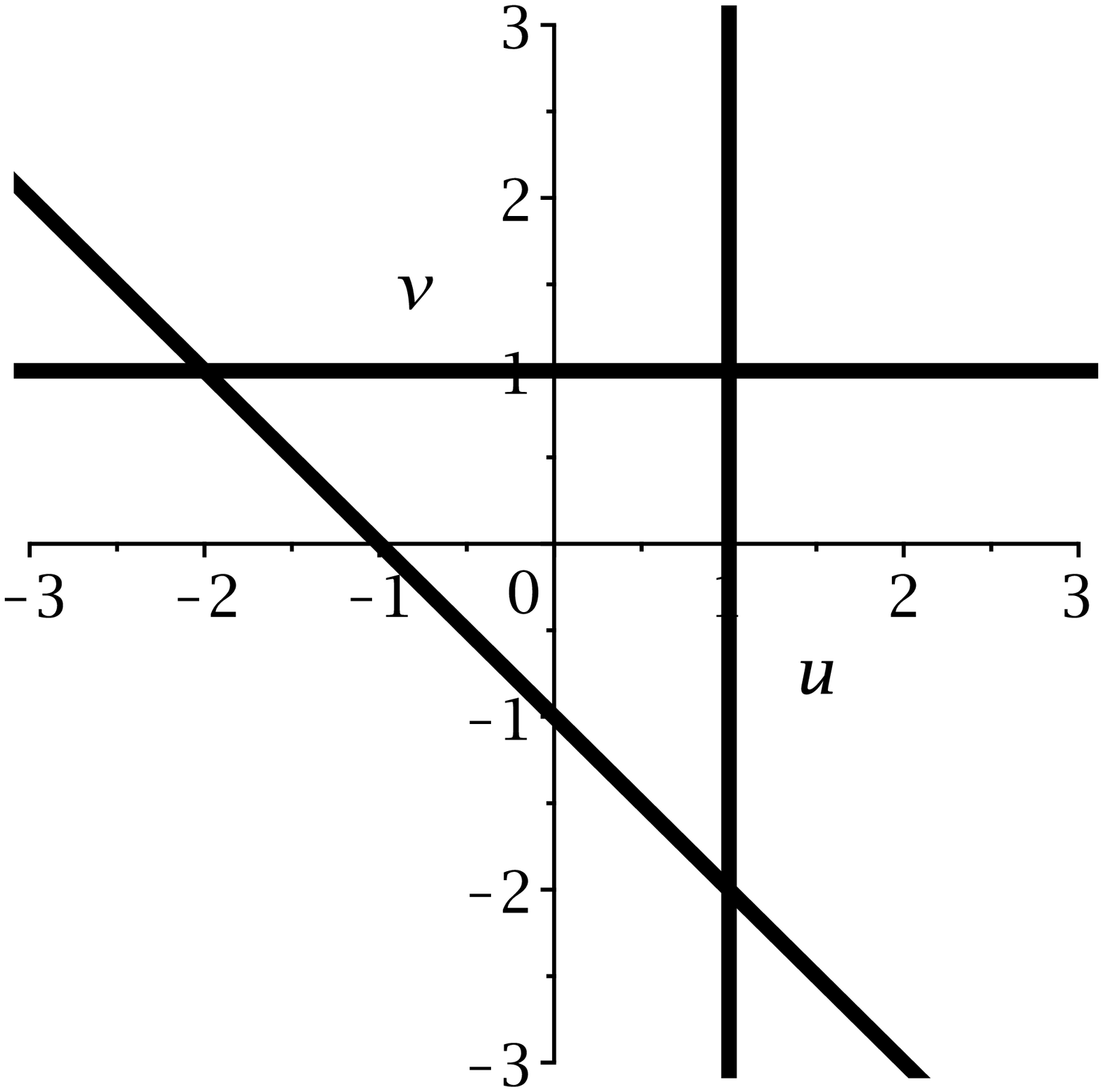}%
			\caption{The zero set $\{h=0\}$ of Thm. \ref{thm_main} \eqref{thm_main_iii} on $\{x=1+u+v,\ y=1-u,\ z=1-v\}$.}
		\end{figure}
	\end{minipage}
	
	A question that naturally arises when studying homogeneous polynomials is to ask whether the corresponding projective varieties are singular. In the setting of hyperbolic polynomials $h$, we might run into the situation where the connected components of $\mathrm{hyp}(h)$ are not pairwise equivalent, and one has a singular point in its boundary while the other does not. Thus we say that a connected component of $\mathrm{hyp}(h)$, or a connected component $\mathcal{H}\subset\mathrm{hyp}_1(h)$ depending on the context, is singular at infinity if its boundary contains a singular point of $h$. That means that there exists $p\in\partial\left(\mathbb{R}_{>0}\cdot\mathcal{H}\right)$, such that $\D h_p=0$. Note that singular at infinity implies $(h=0)$ being singular as a real projective variety. In the case of special homogeneous surfaces, we obtain the following result.
	
	\begin{Prop}\label{prop_shs_sing_at_inf}
		Special homogeneous surfaces are singular at infinity.
	\end{Prop}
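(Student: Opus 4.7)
The plan is to invoke the classification given in Theorem \ref{thm_main}, which reduces the problem to three explicit families of polynomials, and in each of them to exhibit a point $p\in\partial(\mathbb{R}_{>0}\cdot\mathcal{H})$ at which $\D h_p=0$. In each family $h$ is a product of powers of simple linear or quadratic factors, so a natural candidate for $p$ is a common zero of enough of these factors, chosen so that the product rule forces every partial derivative to vanish.

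In case \eqref{thm_main_i}, where $h=(x^2-y^2-z^2)^{\tau/2}$ with $\tau/2\geq 2$, any nonzero null vector, for instance $p=(1,1,0)$, lies on the light cone and thus in the boundary of the forward timelike cone over $\mathcal{H}$; differentiation leaves a factor of $x^2-y^2-z^2$ and hence $\D h_p=0$. In case \eqref{thm_main_ii}, I would take $p=(1,0,-1)$, which is a common zero of $x+z$ and $x^2-y^2-z^2$; applying the product rule to $h=(x+z)^{\tau-2k}(x^2-y^2-z^2)^k$ and using $k\geq 1$ and $\tau-2k\geq 1$, every summand of $\D h$ retains at least one of these two vanishing factors to a positive power. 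In case \eqref{thm_main_iii}, with $h=x^ay^bz^c$ and $a,b,c\geq 1$, I would take the axis point $p=(1,0,0)$; each of $\partial_x h$, $\partial_y h$, $\partial_z h$ contains $y$ or $z$ to a strictly positive power and therefore vanishes at $p$.

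The one step that requires genuine verification, and the only mild obstacle of the argument, is confirming that each proposed $p$ lies in the closure of the cone over the correct connected component $\mathcal{H}$, as opposed to lying in the boundary of a disjoint component of $\mathrm{hyp}(h)$. This is resolved by reading the defining inequalities of the relevant component directly from the factorisation of $h$: the forward timelike cone in \eqref{thm_main_i}, the appropriate wedge bounded by $\{x+z=0\}$ and the light cone in \eqref{thm_main_ii}, and an open octant in \eqref{thm_main_iii}. In each case the chosen $p$ is manifestly a limit of points of the component in question, so no further structural input beyond Theorem \ref{thm_main} is needed.
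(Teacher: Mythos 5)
Your proposal is correct and follows essentially the same route as the paper: reduce via the classification of Theorem \ref{thm_main} to the three explicit families, then exhibit a boundary point of the cone over the relevant component at which the product rule forces $\D h$ to vanish (the paper uses the whole light cone in case \eqref{thm_main_i}, the ray through $(1,0,-1)$ in case \eqref{thm_main_ii}, and a coordinate half-axis in case \eqref{thm_main_iii}, matching your choices). The only cosmetic difference is that the paper disposes of the remaining connected components in one line by noting they are all isomorphic, which your symmetric choice of points also handles implicitly.
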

	
		Note that $\mathcal{H}\subset\{h=1\}$ being singular at infinity is a stronger condition than just requiring that $(h=0)$ is a singular algebraic curve. While in the case of homogeneous (G)PSR surfaces, that is special homogeneous surfaces, we have shown that it is in fact equivalent, this might not hold for homogeneous (G)PSR manifolds in higher dimension.
		
		We also study the curvature of special homogeneous surfaces. This is motivated by \cite{W} where the scalar curvature of hyperbolic level sets in the K\"ahler cone of intersection Calabi-Yau $3$-folds were studied, and furthermore by the results of \cite{L3}. In the latter, the asymptotic behaviour of PSR manifolds that are closed in the ambient space are studied, and one particular result \cite[Thm.\,1.15]{L3} is that asymptotically, closed PSR manifolds that are not singular at infinity behave asymptotically as a metric space like precisely one of the homogeneous PSR manifolds. While the implications for the asymptotic curvature behaviour for PSR manifolds, closed or not, is not completely understood yet, this result points to the importance of understanding the curvature of homogeneous (G)PSR manifolds in order to describe asymptotics of (G)PSR manifolds in general. The two-dimensional case that we are studying in this work represents the simplest non-trivial case in that regard.

	\begin{Prop}\label{prop_scalar_curvature}
		The scalar curvatures $S$ of the special homogeneous homogeneous surfaces $\mathcal{H}\subset\{h=1\}$ in Theorem \ref{thm_main} with respect to the centro-affine metric $g=-\frac{1}{\tau}\partial^2h|_{T\mathcal{H}\times T\mathcal{H}}$ is given by
			\begin{equation*}
				\begin{array}{ll}
					\text{Thm. \ref{thm_main} \eqref{thm_main_i}:} & S=-2,\\
					\text{Thm. \ref{thm_main} \eqref{thm_main_ii}:} & S=-\frac{\tau^2}{2k(\tau-k)},\ 1\leq k<\frac{\tau}{2},\\
					\text{Thm. \ref{thm_main} \eqref{thm_main_iii}:} & S=0\text{ for all values }a,b,c.
				\end{array}
			\end{equation*}
	\end{Prop}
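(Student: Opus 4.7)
The plan starts by rewriting the metric in a form adapted to the multiplicative structure of $h$. Since $\partial^2 h = h\cdot\mathrm{Hess}(\log h) + h^{-1}\,dh\otimes dh$ and $dh|_{T\mathcal{H}}=0$, on $\{h=1\}$ one has
\[
g \;=\; -\tfrac{1}{\tau}\,\mathrm{Hess}(\log h)\bigr|_{T\mathcal{H}\times T\mathcal{H}},
\]
and $\log h$ splits additively for each of the three polynomial families. Since $\mathcal{H}$ is homogeneous, $S$ is constant and may be computed in any one convenient chart.

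For case \eqref{thm_main_i}, $\log h = \tfrac{\tau}{2}\log h_0$ with $h_0 = x^2-y^2-z^2$, and on $\{h_0=1\}$ the $dh_0\otimes dh_0$ summand of $\mathrm{Hess}(\log h_0)$ vanishes upon restriction to $T\mathcal{H}$, leaving $g = -\mathrm{diag}(1,-1,-1)|_{T\mathcal{H}}$: the Minkowski metric on the standard unit hyperboloid, i.e.\ $H^2$ with sectional curvature $-1$. Hence $S=-2$. For case \eqref{thm_main_iii}, on the positive-octant component introduce logarithmic coordinates $\tilde x=\log x$, $\tilde y=\log y$, $\tilde z=\log z$. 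Then $\log h = a\tilde x + b\tilde y + c\tilde z$ is affine and $\mathcal{H}$ is realised as the affine plane $\{a\tilde x + b\tilde y + c\tilde z = 0\}$. Using $dx=x\,d\tilde x$ and the analogous relations, $\mathrm{Hess}(\log h) = -\mathrm{diag}(a/x^2,b/y^2,c/z^2)$ becomes the constant diagonal form $-\mathrm{diag}(a,b,c)$ in logarithmic coordinates. A constant positive-definite form restricted to an affine plane defines a flat metric, so $S=0$; the remaining components of $\mathrm{hyp}_1(h)$ are isometric to this one via the sign-change symmetries in $G^h$.

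For case \eqref{thm_main_ii}, write $\xi = x+z$ and $m=\tau-2k$; expanding gives
\[
\mathrm{Hess}(\log h) \;=\; -\tfrac{m}{\xi^2}\,d\xi\otimes d\xi + \tfrac{k}{h_0}\,\partial^2 h_0 - \tfrac{k}{h_0^2}\,dh_0\otimes dh_0.
\]
On the branch $\{\xi>0,\ h_0>0\}$ the equation $h=1$ forces $h_0 = \xi^{-m/k}$, determining $x$ and $z$ explicitly as functions of $(\xi, y)$. Pulling back the displayed tensor yields a $2\times 2$ symmetric matrix whose entries are rational in $\xi, y$; the key observation is that the substitution $v := y/\xi$ kills the cross term and (after the arithmetic identity $m+2(\tau-k)+m^2/k=\tau(\tau-k)/k$) collapses the diagonal entries to the strikingly clean form
\[
g \;=\; \tfrac{\tau-k}{k\xi^2}\,d\xi^2 + \tfrac{2k}{\tau}\,\xi^{\tau/k}\,dv^2.
\]
Setting $u = \xi^{-\tau/(2k)}$ and rescaling $v$ by the constant $\sqrt{\tau/(2(\tau-k))}$ brings this into the form $\tfrac{4k(\tau-k)}{\tau^2}\cdot u^{-2}(du^2+d\tilde v^2)$, a homothety of the standard upper half-plane model of $H^2$ with curvature $-1$. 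Therefore $K = -\tau^2/\bigl(4k(\tau-k)\bigr)$ and $S = 2K = -\tau^2/\bigl(2k(\tau-k)\bigr)$, as claimed.

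The main obstacle is organising case~\eqref{thm_main_ii}: identifying the change of variable $v=y/\xi$ that diagonalises the pulled-back Hessian is the critical simplification, after which the metric is manifestly homothetic to the hyperbolic plane and no explicit curvature formula (Brioschi, etc.) needs to be invoked. As a sanity check, an alternative route exploits the fact that, by Theorem~\ref{thm_main}~\eqref{thm_main_ii}, $G_0^h \cong \mathrm{Aff}(\mathbb{R})$ acts transitively on $\mathcal{H}$ with finite stabiliser, so $g$ is a left-invariant metric on the two-dimensional affine group; a Koszul-formula calculation shows any such metric (with $\{E,F\}$ a $g$-orthogonal basis satisfying $[E,F]=F$) has constant Gaussian curvature $K = -g(E,E)^{-1}$, and evaluating $g(E,E)$ at $p=(1,0,0)$ recovers the same value.
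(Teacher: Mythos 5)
Your proposal is correct, and for cases \eqref{thm_main_i} and \eqref{thm_main_ii} it takes a genuinely different route from the paper. The paper works Lie-theoretically: for \eqref{thm_main_i} it brings $h$ to standard form and invokes the scalar curvature formula of Lemma \ref{lem_scalar_curvature} (the term $P_3$ vanishes, so $S=n(1-n)=-2$); for \eqref{thm_main_ii} it builds a left-invariant orthonormal frame $\{X,Y\}$ from the generators $a_1,B$ with $[a_1,B]=a_1$, computes $g_q(a_1q,a_1q)=k/\tau$ and $g_q(Bq,Bq)=4k(\tau-k)/\tau^2$, and runs Koszul's formula — exactly the ``sanity check'' you sketch at the end, and your normalisation $K=-g(E,E)^{-1}$ for $[E,F]=F$ with $E\perp F$ reproduces the paper's $K=-\tau^2/(4k(\tau-k))$. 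Your main argument instead exploits the identity $\partial^2h=h\,\mathrm{Hess}(\log h)+h^{-1}\,dh\otimes dh$ and the multiplicative structure of all three families; I checked the central computation in case \eqref{thm_main_ii} (the constraint gives $dh_0|_{T\mathcal{H}}=-\tfrac{m h_0}{k\xi}d\xi$, the $y$-dependent terms assemble into $\tfrac{2k}{h_0}(dy-\tfrac{y}{\xi}d\xi)^2=\tfrac{2k\xi^2}{h_0}dv^2$, and $3m+2k+m^2/k=\tau(\tau-k)/k$) and your displayed metric and the resulting half-plane model are correct. What your route buys is stronger output: it exhibits each surface explicitly as a homothety of a model space ($H^2$ with factor $1$, resp. $4k(\tau-k)/\tau^2$, resp. a flat affine plane), hence determines the full isometry type rather than only $S$; what the paper's route buys is uniformity with the classification machinery and no coordinate computation. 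For case \eqref{thm_main_iii} the two arguments are morally the same (flatness from the abelian structure), yours realised in logarithmic coordinates, the paper's via Koszul for an abelian frame. One cosmetic slip: in case \eqref{thm_main_iii} you write the restricted form as $-\mathrm{diag}(a,b,c)$ where the metric $g=-\tfrac{1}{\tau}\mathrm{Hess}(\log h)$ is $+\tfrac{1}{\tau}\mathrm{diag}(a,b,c)$ in logarithmic coordinates; the positive-definiteness and flatness conclusions you draw are nevertheless the right ones.
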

	
	Note that the scalar curvature of the surfaces in Thm. \ref{thm_main} \eqref{thm_main_ii} is strictly smaller than $-2$ for all $1\leq k<\frac{\tau}{2}$, see Figure \ref{fig_curvatures_ii}.
		\begin{figure}[H]
			\centering
			\includegraphics[scale=0.3]{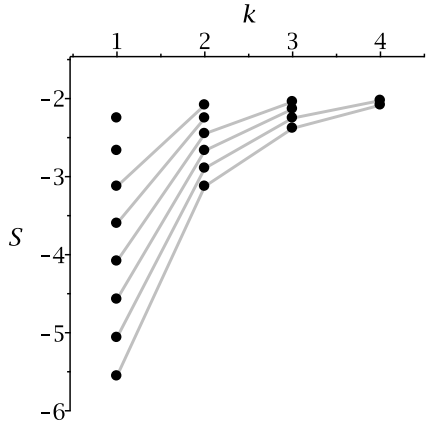}
			\caption{Scalar curvatures of the surfaces in Thm. \ref{thm_main} \eqref{thm_main_ii} for $3\leq\tau\leq 10$. The connected dots represent a fixed value $\tau$ with $1\leq k<\frac{\tau}{2}$.}\label{fig_curvatures_ii}
		\end{figure}
	\noindent
	For every fixed degree $\tau$ of the polynomials, the minimum of the scalar curvatures of special homogeneous surfaces is obtained by Thm. \ref{thm_main} \eqref{thm_main_ii} for $k=1$ with $S=-\frac{\tau^2}{2(\tau-1)}$.
	
\section{Preliminaries}
	In the following we will introduce the definitions and technicalities necessary to prove our results. We start out with the definition of the type of polynomials that we are studying.
	\begin{Def}
		Let $h:\mathbb{R}^{n+1}\to\mathbb{R}$ be a homogeneous polynomial. Then $h$ is called \textbf{hyperbolic} if there exists a point $p\in\{h>0\}$, such that the negative Hessian $-\partial^2h_p$ is of Minkowski type, that is has one negative and $n$ positive eigenvalues. Such a point $p$ is then called hyperbolic point of $h$, and we will denote the set of hyperbolic points of a hyperbolic polynomial $h$ with $\mathrm{hyp}(h)$. Two hyperbolic polynomials $h,\overline{h}$ are called \textbf{equivalent} if they are related by a linear transformation of the ambient space, that is if there exists $A\in\mathrm{GL}(n+1)$, such that $A^*h=\overline{h}$.
	\end{Def}
	
	The above definition implies that hyperbolic polynomials are of degree at least two. Also note that there is a canonical identification of real symmetric $\tau$-tensors on $\mathbb{R}^{n+1}$ and homogeneous polynomials on $\mathbb{R}^{n+1}$.
	
	\begin{rem}
		Hyperbolic polynomials in particular contain the set of strictly Lorentzian polynomials, which have recently gained much traction in pure mathematics \cite{BH}. This inclusion follows from \cite[Thm.\,2.16]{BH}. There is a more direct way however to see that this holds, which we will present now. Let $h$ be a strictly Lorentzian polynomial of degree $\tau\geq 2$ in $n$ real variables $x_1,\ldots,x_n$. By definition, $h$ has only positive coefficients, and for any fixed $1\leq j\leq n$, the bilinear form $-\partial^{n-2}_{x_j}h(\cdot,\cdot)$ is of Minkowski type. If $h(e_j)>0$, where $e_j$ denotes the $j$-th vector in the standard orthonormal basis of $\mathbb{R}^n$, it follows that $h$ is hyperbolic. If $h(e_j)=0$, we use that up to a positive scale $-\partial^{n-2}_{x_j}h(\cdot,\cdot)$ and $-\partial^2h_{e_j}$ coincide by Euler's theorem for homogeneous functions. Hence, $-\partial^2h_{e_j}$ having $n-1$-dimensional light cone and all coefficients of $h$ being positive implies that in any open neighbourhood of $e_j$ there exists some point $p$, such that $-\partial^2h_p$ is of Minkowski type and $h(p)>0$. Thus $p$ is a hyperbolic point of $h$.
		
		Further note that there is another common use for the term hyperbolic polynomial. In \cite{G,H} a complex homogeneous polynomial $h:\mathbb{C}^n\to\mathbb{C}$ is called hyperbolic with respect to a vector $v\in\mathbb{R}^n$, if $h(x+tv)$ has has $\mathrm{deg}(h)$ real zeros in $t$ for all $x\in\mathbb{R}^n\subset\mathbb{C}^n$. Our definition is consistent with previous works in our setting, and \cite{LSZH}.
	\end{rem}
	
	Next we will introduce the manifolds that we will be working with, which are defined to be contained in certain level sets of hyperbolic polynomials.
	
	\begin{Def}
		Let $h:\mathbb{R}^{n+1}\to\mathbb{R}$ be a hyperbolic polynomial of degree $\tau\geq 3$. A hypersurface $\mathcal{H}$ contained in the level set $\{h=1\}$ of a hyperbolic polynomial $h:\mathbb{R}^{n+1}\to\mathbb{R}$ is called \textbf{projective special real} (\textbf{PSR}) manifold if $\tau=3$, and \textbf{generalised projective special real} (\textbf{GPSR}) manifold if $\tau\geq 4$. When not restricting the degree to be either equal to or greater than $3$, we will write \textbf{(G)PSR} manifolds instead. Similarly, for their defining polynomials, we call two (G)PSR manifolds equivalent if they are related by a linear transformation of their ambient space.
	\end{Def}
	
	Note that two (G)PSR manifolds being equivalent automatically implies that their defining polynomials are equivalent via that same transformation. The other direction does in general not hold without further assumptions regarding the geometry of the involved (G)PSR manifolds. This can be seen by observing that any (G)PSR manifold is in no case equivalent to an open subset of itself that does not coincide with it.
	
	(G)PSR manifold carry a natural Riemannian metric given by the restriction of the negative Hessian of their respective defining polynomials to their tangent space. In symbols, this means that for any (G)PSR manifold $\mathcal{H}\subset\{h=1\}$, $g=-\partial^2h|_{T\mathcal{H}\times T\mathcal{H}}$ is Riemannian. This is a consequence of the hyperbolicity of $h$ and Euler's theorem for homogeneous functions. The latter implies that $-\partial^2 h_p(p,p)=\tau(\tau-1)$ for all $p\in\mathcal{H}$, where $\tau=\mathrm{deg}(h)$. Together with $-\partial^2h$ being of Minkowski type, and $T_p\mathcal{H}$ and $\mathbb{R}\cdot p$ being orthogonal with respect to $-\partial^2h$, this precisely means that $g>0$. The metric $g$ is referred to as the \textbf{centro-affine metric} \cite{CNS} of $\mathcal{H}$. As mentioned in the introduction, we will introduce a refinement of the term singular that incorporates information about connected components of $\mathrm{hyp}_1(h)$ for a hyperbolic polynomial $h$.
	
	\begin{Def}
		A (G)PSR manifold $\mathcal{H}\subset\{h=1\}\subset\mathbb{R}^{n+1}$ is called \textbf{singular at infinity} if there exists $p\in\left(\mathbb{R}_{>0}\cdot\mathcal{H}\right)\cap \{h=0\}$, such that $\D h_p=0$.
	\end{Def}
	
	As mentioned in the introduction, $\mathcal{H}\subset\{h=1\}$ being singular at infinity implies that the real projective algebraic variety $(h=0)$ is singular. Compared to the latter, our definition allows us to keep track of where the singularity is located, since the other direction need not necessarily hold even if we are assuming $\mathcal{H}$ to be a connected component of $\mathrm{hyp}_1(h)$. A concept related to the above definition is so-called \textbf{regular boundary behaviour} of (G)PSR manifolds. Following \cite{CNS}, a (G)PSR manifold $\mathcal{H}\subset\{h=1\}$ is said to have regular boundary behaviour of it is not singular at infinity and $-\partial^2 h$ is positive semi-definite on $T\left(\left(\mathbb{R}_{>0}\cdot\mathcal{H}\right)\setminus\{0\}\right)$ with only one-dimensional kernel.
	
	There are no general classification results for homogeneous real polynomials of degree at least three. Restricting to hyperbolic polynomials or, equivalently, (G)PSR manifolds is too lax of a restriction to obtain such results with currently available tools. However, by further restricting either the degree of the polynomials, the dimension of the ambient space, or geometrical aspects of the (G)PSR manifolds, partial results have been obtained. The following definition details the restriction to homogeneous spaces in our setting.
	
	\begin{Def}
		A (G)PSR manifold $\mathcal{H}\subset\{h=1\}$ of dimension $n$ is called \textbf{homogeneous} if there exists a Lie subgroup $G\subset\mathrm{GL}(n+1)$ that acts transitively on $\mathcal{H}$.
	\end{Def}
	
		The reason to restrict to subgroups of the linear transformations in the above definition is as follows. Linear transformations that leave the defining polynomial of a (G)PSR manifold $\mathcal{H}$ invariant preserve the centro-affine metric $g$ and consequently its Levi-Civita connection, and preserve the centro-affine connection $\nabla^\mathrm{ca}$, which is defined by the centro-affine Gau{\ss} equation
			\begin{equation*}
				\mathrm{D}_X Y = \nabla^\mathrm{ca}_X Y + g(X,Y)\xi
			\end{equation*}
		for all $X,Y\in\mathfrak{X}(\mathcal{H})$, where $\mathrm{D}$ denotes the flat connection of the ambient space and $\xi$ denotes the position vector field in the ambient space. For the other direction, we have the following lemma.
		
	\begin{Lem}
		Let $\mathcal{H}\subset\{h=1\}$ be a (G)PSR manifold and $F$ an isometry of $\mathcal{H}$. If $F$ preserves the centro-affine connection of $\mathcal{H}$, there exists a linear transformation $A$ of the ambient space, such that $A|_{\mathcal{H}}=F$.
		\begin{proof}
			If $F$ is an isometry of $\mathcal{H}\subset\mathbb{R}^{n+1}$ that additionally preserves $\nabla^{\mathrm{ca}}$, we obtain from the centro-affine Gau{\ss} equation that
				\begin{equation*}
					\mathrm{D}_{F_*X}(F_*Y)=F_*\mathrm{D}_X Y
				\end{equation*}
			for all $X,Y\in\mathfrak{X}(\mathcal{H})$. Let $\overline{F}$ denote the homogeneous extension of $F$ degree one to the cone $U=\mathbb{R}_{>0}\cdot\mathcal{H}$ spanned by $\mathcal{H}$, so that $\overline{F}(rp)=rF(p)$ for all $r>0$ and all $p\in\mathcal{H}$. Choose a frame $\{X_1,\ldots,X_n\}$ of $\mathfrak{X}(\mathcal{H}\cap V)$, where $V\subset U$ is a possibly smaller open cone, such that such a choice is possible. Then we define a frame $\{\xi,Y_1,\ldots,Y_n\}$ of $\mathfrak{X}(V)$, where $\xi$ again denotes the position vector field and $(Y_i)_{rp} = (rX_i)_p$ for all $r>0$, $p\in\mathcal{H}\cap V$, and all $1\leq i\leq n$. The vector fields $Y_1,\ldots,Y_n$ are tangential to the level sets of $h$, and we have $(\overline{F}_* Y_i)_{rp} = r (F_* X_i)_p$ for all $r>0$, $p\in\mathcal{H}\cap V$, and all $1\leq i\leq n$, and we obtain $\mathrm{D}_{\overline{F}_* Y_i}(\overline{F}_* Y_j) = \overline{F}_*\mathrm{D}_{Y_i} Y_j$ for all $1\leq i,j\leq n$. Observe that $(\overline{F}_*\xi)_q = \D \overline{F}_{\overline{F}^{-1}(q)}(\xi_{\overline{F}^{-1}(q)}) = \overline{F}(\overline{F}^{-1}(q))=\xi_q$ for all $q\in V$. Hence, we have for all $Z\in\mathfrak{X}(V)$
				\begin{equation*}
					\mathrm{D}_{\overline{F}_* Z}(\overline{F}_* \xi) = \mathrm{D}_{\overline{F}_* Z}\xi=\overline{F}_* Z=\overline{F}_* \mathrm{D}_Z \xi.
				\end{equation*}
			We further calculate using the torsion-freeness of the flat connection, $\overline{F}_*\xi = \xi$, and $[\overline{F}_* X,\overline{F}_* Y] = \overline{F}_*[X,Y]$ for all $X,Y\in\mathfrak{X}(V)$,
				\begin{equation*}
					\mathrm{D}_{\overline{F}_* \xi} (\overline{F}_* Y_i) = \mathrm{D}_{\overline{F}_* Y_i} \xi + [\overline{F}_*\xi,\overline{F}_* Y_i] = \overline{F}_* \left(Y_i + [\xi, Y_i]\right) = \overline{F}_* \left( \mathrm{D}_{Y_i} \xi + [\xi,Y_i]\right) = \overline{F}_* \mathrm{D}_\xi Y_i
				\end{equation*}
			for all $1\leq i\leq n$. This shows that $\overline{F}$ preserves the flat connection on $V$. From $V$ being open in the ambient space it follows that $\overline{F}$ is indeed a restriction of a linear map $A$, and that $A|_\mathcal{H}=F$ as claimed.
		\end{proof}
	\end{Lem}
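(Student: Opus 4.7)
The strategy is to extend the isometry $F$ from $\mathcal{H}$ to the open cone $U = \mathbb{R}_{>0}\cdot\mathcal{H}$ in the ambient space, verify that this extension preserves the flat connection $\mathrm{D}$, and then invoke the fact that a $\mathrm{D}$-preserving diffeomorphism between open subsets of $\mathbb{R}^{n+1}$ is the restriction of an affine map. Because the extension will be homogeneous of degree one, the affine map must fix the origin and therefore be linear; this linear map $A$ will then satisfy $A|_{\mathcal{H}}=F$.

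To carry this out, I would first define $\overline{F}\colon U\to U$ by $\overline{F}(rp)=rF(p)$ for $r>0$ and $p\in\mathcal{H}$. By construction $\overline{F}$ is smooth and homogeneous of degree one, which immediately gives $\overline{F}_*\xi=\xi$ for the position vector field $\xi$. On a possibly smaller open sub-cone $V\subset U$, I pick a local frame $X_1,\ldots,X_n$ of $T(\mathcal{H}\cap V)$ and extend each $X_i$ radially to a vector field $Y_i$ on $V$ that is tangent to the level sets of $h$ and scales correctly with the cone parameter, so that $\{\xi,Y_1,\ldots,Y_n\}$ is a frame of $TV$ and $(\overline{F}_*Y_i)|_{\mathcal{H}}=F_*X_i$. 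The key identity to verify is then $\mathrm{D}_{\overline{F}_*Z_a}(\overline{F}_*Z_b)=\overline{F}_*\mathrm{D}_{Z_a}Z_b$ on all pairs of frame fields.

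This splits into four cases. For $(Y_i,Y_j)$, one applies the centro-affine Gauss equation $\mathrm{D}_XY=\nabla^{\mathrm{ca}}_XY+g(X,Y)\xi$ at points of $\mathcal{H}$ and uses that $F$ preserves each of $g$, $\nabla^{\mathrm{ca}}$, and (via $\overline{F}$) the position vector field, so both the tangential and the normal part of the Gauss decomposition transform correctly; the identity on $\mathcal{H}$ then propagates to all of $V$ by homogeneity of both sides. For $(Y_i,\xi)$ one simply uses $\mathrm{D}_Z\xi=Z$ together with $\overline{F}_*\xi=\xi$. For $(\xi,Y_i)$ one combines the previous case with torsion-freeness of $\mathrm{D}$ and the intertwining of Lie brackets, $[\overline{F}_*Z,\overline{F}_*W]=\overline{F}_*[Z,W]$. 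The case $(\xi,\xi)$ is trivial from $\mathrm{D}_\xi\xi=\xi$. With $\overline{F}$ shown to preserve $\mathrm{D}$, the affine map argument closes the proof, and homogeneity forces linearity. I expect the main obstacle to be the first case, where one must keep careful track of the splitting into tangential and normal components of the Gauss equation and check that an identity initially established only at points of $\mathcal{H}$ extends to the full open cone via the radial-homogeneity of the chosen frame.
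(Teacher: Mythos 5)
Your proposal is correct and follows essentially the same route as the paper's proof: homogeneous degree-one extension $\overline{F}$, verification that $\overline{F}$ preserves the flat connection $\mathrm{D}$ on the frame $\{\xi,Y_1,\ldots,Y_n\}$ case by case (Gauss equation for the tangential pair, $\mathrm{D}_Z\xi=Z$ and torsion-freeness for the mixed pairs), and then the affine rigidity of $\mathrm{D}$-preserving maps. Your added observation that homogeneity forces the affine map to fix the origin and hence be linear is a point the paper leaves implicit, but otherwise the two arguments coincide.
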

	
	\begin{rem}
		Homogeneous PSR manifolds and their defining hyperbolic cubics have been completely classified for any dimension in \cite{DV}. The authors additionally study the homogeneous projective special and quaternionic K\"ahler manifolds that can be obtained via the supergravity r- and q-map, respectively, from homogeneous PSR manifolds. When restricting to curves, all hyperbolic polynomials $h:\mathbb{R}^2\to\mathbb{R}$ of degree $\tau\geq 3$ so that $\mathrm{hyp}_1(h)$ is a homogeneous space have been classified in \cite{L5}. Such curves are called special homogenous curves, and any such polynomial is equivalent to
			\begin{equation*}
				h=x^{\tau-k}y^k
			\end{equation*}
		for precisely one $k\in\left\{1,\ldots,\left\lfloor\frac{\tau}{2}\right\rfloor\right\}$. Furthermore, there are complete classifications of PSR curves \cite{CHM}, PSR surfaces \cite{CDL}, PSR manifolds with reducible defining polynomial \cite{CDJL}, and of GPSR curves with quartic defining polynomial \cite{L4}. When restricting to homogeneous (G)PSR manifolds, it has been shown in \cite{L2} that homogeneous PSR manifolds are singular at infinity, and the same has been shown for homogeneous GPSR manifolds with quartic defining polynomial in \cite{L4}. It is at this stage unknown if a similar statement holds for higher degrees of the hyperbolic defining polynomials, but we carefully expect a positive answer. At least for GPSR surfaces we will show that this in fact holds in Proposition \ref{prop_shs_sing_at_inf}.
	\end{rem}
	
	In this work, we will extend known classifications to homogeneous (G)PSR surfaces independent of the degree of the defining polynomials.
	
	\begin{Def}
		A homogeneous (G)PSR surface defined by a hyperbolic polynomial $h:\mathbb{R}^3\to\mathbb{R}$ of degree at least three is called \textbf{special homogeneous surface}.
	\end{Def}
	
	For hyperbolic cubics in three variables, the classification of PSR surfaces in \cite{CDL} technically already includes a classification of the two homogeneous cases, see \cite[Thm.\,1.1\,a)\,\&\,b)]{CDL}. Our way of proving Theorem \ref{thm_main} will reproduce these two cases.
	
	The next result is of technical nature. It allows us to study changes in the metric centro-affine metric $g$ near a given point, and furthermore an easy calculation of the curvature of the centro-affine metric $g$.
	
	\begin{Prop}\label{prop_standard_form}
		Let $h:\mathbb{R}^{n+1}\to\mathbb{R}$ be a hyperbolic polynomial of degree $\tau\geq 3$ and let $(x,y_1,\ldots,y_n)=(x,y^\mathrm{T})$ denote linear coordinates on $\mathbb{R}^{n+1}$. Then for every hyperbolic point $p$ of $h$, there exists a linear transformation of the ambient space $A\in\mathrm{GL}(n+1)$, such that $A\left(\begin{smallmatrix} 1\\0\end{smallmatrix}\right)=p$, where $\left(\begin{smallmatrix} 1\\0\end{smallmatrix}\right)=(1,0,\ldots,0)^\mathrm{T}$, and
			\begin{equation}\label{eqn_h_standard_form}
				A^*h=x^\tau -x^{\tau-2}\langle y,y\rangle + \sum\limits_{k=3}^\tau x^{\tau-k}P_k(y),
			\end{equation}
		where $\langle\cdot,\cdot\rangle$ denotes the induced standard Euclidean inner product on $\mathbb{R}^n$ and $P_k:\mathbb{R}^n\to\mathbb{R}$, $3\leq k\leq \tau$, are homogeneous polynomials of degree $k$, respectively.
		\begin{proof}
			\cite[Prop.\,3.1]{L2}
		\end{proof}
	\end{Prop}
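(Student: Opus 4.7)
The plan is to construct $A\in\mathrm{GL}(n+1)$ explicitly by taking its first column to be (a suitable rescaling of) $p$ and its remaining $n$ columns to form a basis of the tangent hyperplane $T_p\{h=1\}$ that is orthonormal with respect to $-\partial^2 h_p$. First I would reduce to the case $h(p)=1$ by replacing $p$ with $h(p)^{-1/\tau}p$, which is still a hyperbolic point; this normalisation is what ultimately forces the coefficient of $x^\tau$ in $A^*h$ to equal $1$.

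The key technical point is that $-\partial^2 h_p$ is positive definite on $T_p\{h=1\}$. This follows from Euler's theorem, which gives $\partial^2 h_p(p,\,\cdot\,)=(\tau-1)\,\partial h_p(\,\cdot\,)$, and has two consequences. First, $\partial^2 h_p(p,p)=\tau(\tau-1)h(p)=\tau(\tau-1)>0$, so $p$ is the unique timelike direction of the Minkowski form $-\partial^2 h_p$. Second, $T_p\{h=1\}=\ker\partial h_p$ coincides with the $-\partial^2 h_p$-orthogonal complement of $\mathbb{R}\cdot p$. The Minkowski signature then forces $-\partial^2 h_p$ to be positive definite on this hyperplane, and I can select $v_1,\dots,v_n\in T_p\{h=1\}$ satisfying $-\partial^2 h_p(v_i,v_j)=2\delta_{ij}$.

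Next, I define $A$ by $Ae_0=p$ and $A(0,e_i)^{\mathrm{T}}=v_i$, where $e_0=(1,0,\dots,0)^{\mathrm{T}}$ and $e_i$ is the $i$-th standard basis vector of $\mathbb{R}^n$. Invertibility of $A$ follows from $p\notin T_p\{h=1\}$ (since $\partial h_p(p)=\tau\neq 0$ by Euler) together with the linear independence of the orthonormal $v_i$. To read off the expansion, Taylor-expand $A^*h(1,y)=h(p+y_1v_1+\dots+y_nv_n)$ around $y=0$: the constant term is $h(p)=1$; the linear term $\partial h_p(\sum y_iv_i)$ vanishes because each $v_i\in\ker\partial h_p$; the quadratic term equals $\tfrac12\sum_{i,j}y_iy_j\,\partial^2 h_p(v_i,v_j)=-\langle y,y\rangle$; the cubic and higher-order contributions package automatically into homogeneous polynomials $P_3,\dots,P_\tau$ in $y$. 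Invoking the degree-$\tau$ homogeneity of $A^*h$, namely $A^*h(x,y)=x^\tau A^*h(1,y/x)$, yields precisely \eqref{eqn_h_standard_form}.

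The only non-routine ingredient is the positive-definiteness of $-\partial^2 h_p$ on $T_p\{h=1\}$; once this is secured via Euler combined with the Minkowski signature, the remainder of the argument is essentially bookkeeping: choosing the basis, assembling $A$, and Taylor-expanding. The potential pitfall would be to overlook that this positive-definiteness is not automatic from hyperbolicity alone, but requires the Euler identity to identify $\ker\partial h_p$ as the correct $-\partial^2 h_p$-orthogonal complement of $\mathbb{R}\cdot p$.
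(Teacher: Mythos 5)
Your proposal is correct and is essentially the standard argument that the paper outsources to \cite[Prop.\,3.1]{L2}: use Euler's identity to see that $\ker \D h_p$ is the $\partial^2h_p$-orthogonal complement of the timelike direction $\mathbb{R}\cdot p$, pick a $-\tfrac12\partial^2h_p$-orthonormal basis of it, and Taylor-expand together with homogeneity. The only cosmetic caveat is that the conclusion $A\left(\begin{smallmatrix}1\\0\end{smallmatrix}\right)=p$ together with the leading coefficient $1$ forces $h(p)=1$, so the statement is really about $p\in\mathrm{hyp}_1(h)$; your rescaling handles this, at the price of $A$ mapping $\left(\begin{smallmatrix}1\\0\end{smallmatrix}\right)$ to the normalised point rather than to $p$ itself.
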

	
	If a hyperbolic polynomial $h$ is of the form \eqref{eqn_h_standard_form}, we say that $h$ is in \textbf{standard form}. This name, however, is not supposed to imply that bringing $h$ to standard form is either unique in general, as in the terms $P_k(y)$ in general depend on the choice of the hyperbolic point $p$, or necessarily the cure-all for solving different types of problems. For classification problems in this work and e.g. in \cite{CDJL,L5}, it is, more often than not, not a good starting point. It has, however, been successfully used in the classification of hyperbolic quartics in two variables, cf. \cite{L4}. Moreover, when not restricting to homogeneous spaces, the standard form of hyperbolic polynomials will most likely be a good tool in understanding moduli space structures. First results in this direction for a subset of hyperbolic cubics, independent of the dimension, can be found in \cite{L3}. The advantage of having $h$ in the form \eqref{eqn_h_standard_form} is that the centro-affine metric $g$ at the point $\left(\begin{smallmatrix}1\\0\end{smallmatrix}\right)$ is up to scale simply the scalar product $\langle\cdot,\cdot\rangle$. Furthermore, the point $\left(\begin{smallmatrix}1\\0\end{smallmatrix}\right)$ locally minimises the Euclidean distance of $\{h=1\}$ and the origin. Additionally, the transformation $A$ can be chosen to be smooth, at least locally when ignoring possible monodromy issues that might arise globally on $\mathrm{hyp}_1(h)$, leading to an infinitesimal version of the terms $P_k(y)$ and consequently for partial derivatives of $g$ at $\left(\begin{smallmatrix}1\\0\end{smallmatrix}\right)$. For technical details see \cite[Sect.\,3]{L2}. Since the point $p\in\mathrm{hyp}(h)$ in Proposition \ref{prop_standard_form} is arbitrary, this allows us to obtain the following formula of the scalar curvature of (G)PSR manifolds. Note at this point that if $h$ is in standard form, the point $\left(\begin{smallmatrix}1\\0\end{smallmatrix}\right)$ is contained in $\mathrm{hyp}_1(h)$.
	
	\begin{Lem}\label{lem_scalar_curvature}
		Let $h:\mathbb{R}^{n+1}\to\mathbb{R}$ be a hyperbolic polynomial in standard form. Then the scalar curvature $S$ of the centro-affine metric on $\mathrm{hyp}_1(h)$ at $p=\left(\begin{smallmatrix}1\\0\end{smallmatrix}\right)$ is given by
			\begin{equation*}
				S(p)=n(1-n)+\frac{9\tau}{8}\sum\limits_{i,j,k=1}^{n}\left(-P_3(\partial_i,\partial_i,\partial_k)P_3(\partial_j,\partial_j,\partial_k)+P_3(\partial_i,\partial_j,\partial_k)^2\right),\label{eqn_GPSR_scal_formula}
			\end{equation*}
		where we have identified the cubic polynomial $P_3:\mathbb{R}^n\to\mathbb{R}$ with its symmetric trilinear form in $\mathrm{Sym}^3\left(\mathbb{R}^n\right)^*$, that is $P_3(\partial_i,\partial_j,\partial_k)=\frac{1}{6}\partial_i\partial_j\partial_k P_3(y)$ for all $1\leq i,j,k\leq n$, and denote by $\partial_k=\partial_{y_k}$ the $k$-th unit vector in $\mathbb{R}^n$ for all $1\leq k\leq n$.
		\begin{proof}
			\cite[Prop.\,3.9]{L2}.
		\end{proof}
	\end{Lem}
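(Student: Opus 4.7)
By Proposition \ref{prop_standard_form}, after a linear change of coordinates we may assume that $h$ is in standard form \eqref{eqn_h_standard_form} and that $p=\left(\begin{smallmatrix}1\\0\end{smallmatrix}\right)$. The plan is to parametrize a neighborhood of $p$ in $\mathrm{hyp}_1(h)$ as the graph $\varphi(y)=(x(y),y)$, where $x(y)$ is determined implicitly by $h(x(y),y)=1$ and $x(0)=1$, and then to compute $S(p)$ by a Taylor expansion at $y=0$.

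Matching orders in the relation $h(x(y),y)=1$ produces
\begin{equation*}
	x(y) = 1 + \frac{\langle y, y\rangle}{\tau} - \frac{P_3(y)}{\tau} + O(|y|^4),
\end{equation*}
so that $\partial_i x(0)=0$ and $\partial_i\partial_j x(0)=\frac{2}{\tau}\delta_{ij}$. Pulling $-\frac{1}{\tau}\partial^2 h$ back along $\varphi$ and using $\partial_x\partial_{y_j}h|_p=0$, one obtains
\begin{equation*}
	g_{ij}(0)=\frac{2}{\tau}\delta_{ij},\qquad \partial_l g_{ij}(0)=-\frac{6}{\tau}\,P_3(\partial_i,\partial_j,\partial_l),
\end{equation*}
and in particular $\partial_l g_{ij}(0)$ is totally symmetric in $(i,j,l)$. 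Consequently the Christoffel symbols at $p$ collapse to $\Gamma^k_{ij}(0)=-\frac{3}{2}\,P_3(\partial_i,\partial_j,\partial_k)$.

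The scalar curvature is then $S(p)=g^{ij}(0)\,R_{ij}(0)$ with $g^{ij}(0)=\frac{\tau}{2}\delta^{ij}$, where $R_{ij}=\partial_k\Gamma^k_{ij}-\partial_i\Gamma^k_{kj}+\Gamma^k_{kl}\Gamma^l_{ij}-\Gamma^k_{il}\Gamma^l_{kj}$. The $\Gamma\Gamma$ piece is immediate from the closed form of $\Gamma^k_{ij}(0)$ and contributes $P_3$-quadratic terms. The hard part will be the $\partial\Gamma$ piece, which requires $\partial_a\partial_b g_{ij}(0)$: expanding the pullback of $\partial^2 h$ to second order in $y$ produces a $P_4$-contribution $-\frac{24}{\tau}P_4(\partial_a,\partial_b,\partial_i,\partial_j)$ together with Kronecker-$\delta$ terms coming from the cubic Taylor coefficient of $x(y)$ and from the expansions of $x^{\tau-2},\,x^{\tau-3}$ in $h$. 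The key identity to verify is that the $P_4$-contributions to $\partial_k\Gamma^k_{ij}(0)$ and to $\partial_i\Gamma^k_{kj}(0)$ each equal $-6\sum_k P_4(\partial_i,\partial_j,\partial_k,\partial_k)$ by total symmetry of $P_4$, and so cancel identically in $R_{ij}$. Once this cancellation is established, the remaining $\delta$-coefficients collapse to a multiple of $\delta_{ij}$ whose contraction with $g^{ij}(0)$ delivers the constant $n(1-n)$, while the surviving $P_3^2$-terms assemble, after contraction with $g^{ij}(0)$, to $\frac{9\tau}{8}\sum_{i,j,k}\bigl[-P_3(\partial_i,\partial_i,\partial_k)P_3(\partial_j,\partial_j,\partial_k)+P_3(\partial_i,\partial_j,\partial_k)^2\bigr]$, as asserted.
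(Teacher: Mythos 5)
The paper does not actually prove this lemma; it simply cites \cite[Prop.\,3.9]{L2}, so a self-contained derivation like yours is a genuinely different (and welcome) route, and your graph-parametrisation set-up is the natural one. Your Taylor data are all correct: $x(y)=1+\tfrac{1}{\tau}\langle y,y\rangle-\tfrac{1}{\tau}P_3(y)+O(|y|^4)$, $g_{ij}(0)=\tfrac{2}{\tau}\delta_{ij}$, $\partial_l g_{ij}(0)=-\tfrac{6}{\tau}P_3(\partial_i,\partial_j,\partial_l)$, hence $\Gamma^k_{ij}(0)=-\tfrac{3}{2}P_3(\partial_i,\partial_j,\partial_k)$, and the cancellation of the $P_4$-contributions between $\partial_k\Gamma^k_{ij}$ and $\partial_i\Gamma^k_{kj}$ holds exactly as you state.

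The gap is in the endgame, which you assert rather than compute, and as written your bookkeeping would produce the wrong sign on the quadratic terms. You attribute the $P_3$-quadratic part of $S(p)$ to the $\Gamma\Gamma$ piece and describe the $\partial\Gamma$ piece as only requiring $\partial_a\partial_b g_{ij}(0)$. But contracting $\Gamma^k_{kl}\Gamma^l_{ij}-\Gamma^k_{il}\Gamma^l_{kj}$ with $g^{ij}(0)=\tfrac{\tau}{2}\delta^{ij}$ gives $\tfrac{9\tau}{8}\sum_{i,j,k}\bigl(+P_3(\partial_i,\partial_i,\partial_k)P_3(\partial_j,\partial_j,\partial_k)-P_3(\partial_i,\partial_j,\partial_k)^2\bigr)$, the \emph{negative} of the target. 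The missing ingredient is the $P_3$-quadratic contribution to $\partial_a\Gamma^k_{ij}(0)$ coming from differentiating the inverse metric: $\partial_a g^{kl}(0)=\tfrac{3\tau}{2}P_3(\partial_a,\partial_k,\partial_l)$ contributes $-\tfrac{9}{2}\sum_l P_3(\partial_a,\partial_k,\partial_l)P_3(\partial_i,\partial_j,\partial_l)$ to $\partial_a\Gamma^k_{ij}$, which after insertion into the Ricci tensor and contraction is $-2$ times the $\Gamma\Gamma$ contribution; only the sum of the two yields the stated $\tfrac{9\tau}{8}\sum\bigl(-P_3(\partial_i,\partial_i,\partial_k)P_3(\partial_j,\partial_j,\partial_k)+P_3(\partial_i,\partial_j,\partial_k)^2\bigr)$. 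Likewise the constant $n(1-n)$ is only claimed, not derived: the $\delta$-terms of $\partial_a\partial_b g_{ij}(0)$ (from $x^{\tau-2}\langle y,y\rangle$, from $h_{xx}\,\partial_i x\,\partial_j x$ and from $h_{xy_i}\partial_j x$) need to be written out and contracted. Until these two computations are actually carried out, the argument is an outline whose stated intermediate steps do not yet add up to the asserted formula.
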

	
	Note that the scalar curvature at $\left(\begin{smallmatrix}1\\0\end{smallmatrix}\right)$ only depends on the term $P_3(y)$ in \eqref{eqn_h_standard_form}. For closed PSR surfaces, the two homogeneous examples minimise, respectively maximise, the scalar curvature in the following sense. For any closed PSR surface $\mathcal{H}$, the scalar curvature $S$ with respect to the centro-affine metric takes values in $\left[-\frac{9}{4},0\right]$. The extreme values are realised by the two homogeneous PSR surfaces corresponding to
		\begin{align*}
			h_1&=x^3-x\left(y^2+z^2\right)+\frac{2}{3\sqrt{3}}y^3+\frac{1}{\sqrt{3}}yz^2,\quad S=-\frac{9}{4},\\
			h_2&=x^3-x\left(y^2+z^2\right)+\frac{2}{3\sqrt{3}}y^3-\frac{2}{\sqrt{3}}yz^2,\quad S=0.
		\end{align*}
	The polynomial $h_1$ belongs to Thm. \ref{thm_main} \eqref{thm_main_ii}, and $h_2$ belongs to Thm. \ref{thm_main} \eqref{thm_main_iii}. For any other closed PSR surfaces, the scalar curvature has only values in $\left(-\frac{9}{4},0\right)$. For details and proofs of these statements see \cite[Prop.\,5.12]{L1}. It is thus a natural question to ask whether a similar kind of result holds for higher degree polynomials as well. The problem is that even for quartics, closed quartic GPSR surfaces are not classified yet, and the difficulty spike when comparing the classification of cubic curves \cite{CHM} with quartic curves \cite{L4} indicates that such a classification is a by no means easy task. We are however optimistic that it will eventually be obtained, and our present results will at least help with formulating conjectures about the respective scalar curvature behaviour. Also note that in \cite[Sect.\,4]{T} the curvature of a GPSR surface with quartic defining polynomial is studied. This is, to our knowledge, the only explicit study of a hyperbolic quartic in three real variables in this context.
	
	Lastly in this section, we will introduce two Lemmas which are very helpful in excluding polynomials during the proof of Theorem \ref{thm_main}.
	
	\begin{Lem}\label{lem_no_rays}
		Let $h:\mathbb{R}^{n+1}\to\mathbb{R}$ be a hyperbolic polynomial. Then the intersection of any line $L$ in the ambient space $\mathbb{R}^{n+1}$ with any connected component $\mathcal{H}$ of $\mathrm{hyp}_1(h)$ contains at most finitely many points.
			\begin{proof}
				When restricted to $L$, $(h-1)$ is a real polynomial in one variable and thus has either only finitely many or no zeros, or is identically zero along $L$. Suppose that the latter holds. Then we can write $L=\{p+tv\ |\ t\in\mathbb{R}\}$ for some $v\in\mathbb{R}^{n+1}\setminus\{0\}$ and some $p\in\mathrm{hyp}_1(h)$. Since $h$ is constant along $L$, $v$ is tangent to $\mathrm{hyp}_1(h)$ at $p$ since $\D h_p(v)=0$, and so $-\partial^2h_p(v,v)=0$. This is a contradiction to $-\partial^2h$ restricted to $T_p\;\! \mathrm{hyp}_1(h)$ being positive definite, and to thus the hyperbolicity of $h$.
			\end{proof}
	\end{Lem}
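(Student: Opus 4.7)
The plan is to reduce the statement to a fact about one-variable polynomials. When $h-1$ is restricted to any line $L\subset\mathbb{R}^{n+1}$, the result is a polynomial in a single real parameter, hence has either finitely many zeros or vanishes identically. Since $L\cap\mathcal{H}\subset L\cap\{h=1\}$, finiteness is automatic in the first case, so the substance of the lemma is to exclude the degenerate alternative that $h\equiv 1$ along $L$.

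To rule this out I would argue by contradiction: assume $L\cap\mathcal{H}$ is infinite, so that $h\equiv 1$ along $L$. Parametrise $L=\{p+tv:t\in\mathbb{R}\}$ with $p\in L\cap\mathcal{H}$ (possible since the intersection is nonempty) and $v\in\mathbb{R}^{n+1}\setminus\{0\}$. Constancy of $t\mapsto h(p+tv)$ forces both its first and second derivatives at $t=0$ to vanish, giving $\D h_p(v)=0$ and $\partial^2h_p(v,v)=0$. The first identity places $v$ in the tangent space $T_p\mathcal{H}$, and the second exhibits $v$ as a null vector of $-\partial^2h_p$ restricted to that tangent space.

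This is incompatible with $p$ being a hyperbolic point: by definition $-\partial^2h_p$ has Minkowski signature, and Euler's homogeneity identity $\partial^2h_p(p,p)=\tau(\tau-1)>0$ pins the unique negative direction of $-\partial^2h_p$ to $\mathbb{R}\cdot p$. Since the position vector $p$ is transverse to the level set $\{h=1\}$, the subspace $T_p\mathcal{H}$ is a complement to $\mathbb{R}\cdot p$, and consequently $-\partial^2h_p|_{T_p\mathcal{H}}$ is positive definite. Thus $\partial^2h_p(v,v)=0$ forces $v=0$, contradicting $v\neq 0$.

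I do not expect a genuine obstacle: the entire argument collapses once one observes that $h$ restricted to $L$ is a univariate polynomial. The only nontrivial ingredient is the signature bookkeeping in the contradiction step, which is exactly the positivity of the centro-affine metric $g$ already established in the preliminaries, so no new technical input is required.
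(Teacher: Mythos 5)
Your proof is correct and follows essentially the same route as the paper: restrict $h-1$ to the line, note that a univariate polynomial with infinitely many zeros vanishes identically, and derive a contradiction from the resulting null tangent vector $v$ against the positive definiteness of $-\partial^2h_p$ on $T_p\mathcal{H}$. The additional signature bookkeeping via Euler's identity that you spell out is exactly the positivity argument already recorded in the paper's preliminaries.
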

	
	\begin{Lem}\label{lem_H_bounded_away_from_0}
		Let $h:\mathbb{R}^{n+1}\to\mathbb{R}$ be a hyperbolic polynomial and let $\mathcal{H}$ be connected component of $\mathrm{hyp}_1(h)$. Then in any linear coordinates of the ambient space, $\mathcal{H}$ is bounded away from the origin with respect to the induced Euclidean norm.
			\begin{proof}
				By linear transformation being continuous, it suffices to prove this statement for a fixed choice of linear coordinates on $\mathbb{R}^{n+1}$. Suppose that $\mathcal{H}$ is not bounded away from the origin $0\in\mathbb{R}^{n+1}$. This implies that $0\in\overline{\mathcal{H}}$, which in turn by the continuity of $h$ would imply $h(0)=1$. This is a contradiction to $h$ being a homogeneous polynomial.
			\end{proof}
	\end{Lem}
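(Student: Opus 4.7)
The plan is a short proof by contradiction that uses only continuity of $h$, homogeneity, and the fact that the degree is at least two. First I would reduce to a single choice of linear coordinates: any two choices of linear coordinates on $\mathbb{R}^{n+1}$ are related by some $A\in\mathrm{GL}(n+1)$, and since $A$ is bilipschitz on $\mathbb{R}^{n+1}$ the induced Euclidean norms are equivalent, so being bounded away from the origin in one is equivalent to being bounded away from the origin in any other. Hence it suffices to fix the standard Euclidean norm $|\cdot|$ on $\mathbb{R}^{n+1}$.

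Now assume for contradiction that $\mathcal{H}$ is not bounded away from $0\in\mathbb{R}^{n+1}$. Then there is a sequence $(p_k)_{k\in\mathbb{N}}\subset\mathcal{H}$ with $|p_k|\to 0$, hence $p_k\to 0$. Because $h$ is a polynomial, it is continuous, so $h(p_k)\to h(0)$. But $h$ is homogeneous of degree $\tau\geq 2$, which forces $h(0)=0$, while $h(p_k)=1$ for every $k$ by the definition of $\mathrm{hyp}_1(h)\supset\mathcal{H}$. Passing to the limit gives $1=0$, the desired contradiction.

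I would note in passing that one can also make this quantitative: for any $p\in\mathcal{H}$, writing $p=|p|\cdot\hat p$ with $\hat p\in S^n$, homogeneity gives $1=h(p)=|p|^\tau h(\hat p)$, so $|p|^\tau h(\hat p)=1$; compactness of $S^n$ bounds $|h|$ on $S^n$ by some $M>0$, forcing $|p|\ge M^{-1/\tau}$. However, this requires mildly more care (the value $h(\hat p)$ need not be positive on all of $S^n$, only at $\hat p=p/|p|$ for $p\in\mathcal{H}$), so the cleanest write-up is the sequential contradiction above.

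I do not anticipate a real obstacle here; the statement is essentially a compactness/continuity fact, and the hypotheses ``hyperbolic'' and ``$\mathcal{H}$ connected component'' are not used beyond the single fact that $h\equiv 1$ on $\mathcal{H}$. The only point deserving an explicit remark is the coordinate-independence reduction at the start.
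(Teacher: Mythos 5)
Your proof is correct and follows essentially the same route as the paper's: reduce to one choice of coordinates via the equivalence of norms under linear maps, then derive the contradiction $1=h(0)=0$ from continuity and homogeneity. The quantitative aside is a nice bonus but not needed.
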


\section{Proofs of our results}
	We will start with proving Theorem \ref{thm_main} and Proposition \ref{prop_scalar_curvature}. We split the proof in two cases, depending on the stabiliser of the polynomials being either one- or zero-dimensional. In the one-dimensional stabiliser case, our ansatz is to fix the generator $A$ of the stabiliser of the studied polynomials, which by the hyperbolicity condition restricts $A$ to be a generator of the circle group. We then classify all three-dimensional Lie subalgebras of $\mathfrak{gl}(3)$ that contain $A$ and permit the polynomials to be hyperbolic, and subsequently study the corresponding invariant polynomials. In the case of the stabiliser being discrete, we start with fixing the generators of the two-dimensional Lie algebra that our polynomials are supposed to be invariant under so that hyperbolicity cannot a priori be excluded, classify the different possibilities in terms of explicit generators, and then check that the invariant polynomials are hyperbolic. In any case, we then need to study which of the invariant polynomials $h$ in each considered case are pairwise inequivalent and describe the geometry of $\mathrm{hyp}_1(h)$. It is then a straight forward task to calculate the scalar curvature of the obtained special homogeneous surfaces. Thereafter we will prove Proposition \ref{prop_shs_sing_at_inf}.
	\\
	\vspace{0pt}
	\\
	\noindent
	\textit{Proof of Theorem \ref{thm_main} and Proposition \ref{prop_scalar_curvature}:}\\
	\vspace{-24pt}
	\subsection{One-dimensional stabiliser}\label{sect_1dimstab}
		Suppose that $\mathcal{H}\subset\{h=1\}$ is a special homogeneous surface with one-dimensional stabiliser. For any $p\in\mathcal{H}$, the stabiliser with respect to $p$ as a subgroup $H$ of $\mathrm{GL}(3)$ in particular preserves the bilinear form $-\partial^2 h_p|_{T_p\mathcal{H}\times T_p\mathcal{H}}$, which is positive definite by $p$ being a hyperbolic point of $\mathcal{H}$. Hence, the identity component $H_0$ of $H$ is a subgroup of $\mathrm{SO}(2)$, and for dimensional reasons coincides with the latter. In the following, we will assume without loss of generality that $(x,y,z)^\mathrm{T} = (0,0,1)^\mathrm{T}\in\mathcal{H}$ and that $H_0$ with respect to that point in generated by
			\begin{equation}\label{eqn_A_stab_1dim}
				A=\left(\begin{matrix}
					0 & -1 & 0\\
					1 & 0 & 0\\
					0 & 0 & 0
				\end{matrix}\right).
			\end{equation}
		We now write
			\begin{equation*}
				h=\sum\limits_{k=0}^\tau z^{\tau-k} P_k,
			\end{equation*}
		where $P_k$ is a homogeneous polynomial of degree $k$ in $x,y$ for all $0\leq k\leq \tau$. Then $h$ being invariant under $A$ is equivalent for all $P_k$ to be $\mathrm{SO}(2)$-invariant under the standard representation. The antisymmetry of homogeneous polynomials of odd degree hence implies that $P_k=0$ for all odd $k$. The real valued function $r=\sqrt{x^2+y^2}$ is homogeneous of degree $1$, $\mathrm{SO}(2)$-invariant, and for even $k$ we have that $r^k$ is a polynomial. Then $\frac{P_k}{r^k}$ being constant on $\mathbb{R}^2\setminus\{0\}$ implies that for $k$ even, $P_k$ and $r^k$ coincide up to scale. We can thus write
			\begin{equation}\label{eqn_polys_onedim_stabiliser}
				h=\sum\limits_{\ell=0}^{\left\lfloor\frac{\tau}{2}\right\rfloor} c_\ell z^{\tau-2\ell} (x^2+y^2)^{\ell},
			\end{equation}
		where $c_\ell\in\mathbb{R}$ for all $0\leq \ell\leq \left\lfloor\frac{\tau}{2}\right\rfloor$. We now need to find all such coefficients $c_\ell$, such that the connected component of $\{h=1\}$ containing $(x,y,z)^\mathrm{T}=(0,0,1)^\mathrm{T}$ is a homogeneous space.
		
		To do so, we will first determine all $3$-dimensional Lie subalgebras of $\mathfrak{gl}(3)$ that contain $A$ \eqref{eqn_A_stab_1dim}. For $B=(B_{ij})\in\mathfrak{gl}(3)\setminus\{0\}$ we have
			\begin{align}
				[A,B]&=\left(\begin{matrix}
					-B_{12}-B_{21} & B_{11}-B_{22} & -B_{23}\\
					B_{11}-B_{22} & B_{12}+B_{21} & B_{13}\\
					-B_{32} & B_{31} & 0
				\end{matrix}\right),\label{eqn_A_B_commutator_so1_stab}\\
				[A,[A,B]]&=\left(\begin{matrix}
					-2B_{11}+2B_{22} & -2B_{12}-2B_{21} & -B_{13}\\
					-2B_{12}-2B_{21} & 2B_{11}-2B_{22} & -B_{23}\\
					-B_{31} & -B_{32} & 0
				\end{matrix}\right).\notag
			\end{align}
		The first commutator $[A,B]$ is symmetric in the upper left $2\times2$ block. We can without loss of generality assume that this also holds for $B$, that is $B_{12}=B_{21}$, by the skew-symmetric property of $A$. Hence, the three matrices $A$, $B$, and $[A,B]$ are linearly independent if and only if $B$ and $[A,B]$ are linearly independent. Suppose that $B=r[A,B]$ for some $r\ne 0$. We quickly see that $B_{13}=B_{23}=B_{31}=B_{32}=0$, $B_{22}=-B_{11}$. But then $B_{11}=r(-2B_{21})=r^2(-2B_{11}+2B_{22})=-4r^2B_{11}$, implying $B_{11}=0$ and thereby $B=0$, which contradicts $B\ne 0$. Thus, $A$, $B$, and $[A,B]$ are linearly independent.
		
		For $\{A,B,[A,B]\}$ to generate a three-dimensional Lie subalgebra of $\mathfrak{gl}(3)$, it is a necessary condition that $[A,[A,B]]$ lies in their span. Since $[A,[A,B]]$ has a symmetric upper left $2\times2$ block, we need to check when $rB+s[A,B]=[A,[A,B]]$ has a solution in $r,s\in\mathbb{R}$. Suppose that $r=0$ and $s=0$ solves that equation. Then $B$ must be of the form
			\begin{equation}\label{eqn_B_vanishing_A_A_B}
				B=\left(\begin{matrix}
					B_{11} & -B_{21} & 0\\
					B_{21} & B_{11} & 0\\
					0 & 0 & B_{33}
				\end{matrix}\right)
			\end{equation}
		Since $B_{12}=B_{21}$, this implies $B_{21}=0$. By assumption, $(0,0,1)^\mathrm{T}$ is a hyperbolic point of $h$. Since $B\ne 0$ and $B$ is not an element of the Lie algebra of the stabiliser of that point by construction, $B_{33}\ne 0$ must hold. But then the image of
			\begin{equation*}
				t\mapsto e^{tB}\left(\begin{smallmatrix}0\\0\\1\end{smallmatrix}\right)
			\end{equation*}
		is a ray, which is contained in $\mathcal{H}$. This is a contradiction to Lemma \ref{lem_no_rays}. Thus, we can exclude the case $r=0$ and $s=0$, or equivalently $[A,[A,B]]=0$. Now suppose $[A,[A,B]]\ne 0$. Since both $[A,B]$ and $[A,[A,B]]$ have a symmetric upper left $2\times2$ block with alternating sign in the diagonal, we obtain that $[A,B]$ and $[A,[A,B]]$ must be linearly dependent, or $B_{22}=-B_{11}$. In the first case, that is $r=0$, we obtain by comparing the diagonal entries of $s[A,B]=[A,[A,B]]$ that
			\begin{equation*}
				sB_{21}=B_{11}-B_{22},\quad s(B_{11}-B_{22})=-4B_{21}.
			\end{equation*}
		Since $s\ne 0$ by $[A,[A,B]]\ne 0$, this implies that $s^2B_{21} = -4B_{21}$. This can only be fulfilled if $B_{21}=0$, which in turn implies $B_{22}=B_{11}$. Hence, the upper left $2\times 2$ blocks of $[A,B]$ and $[A,[A,B]]$ vanish and the remaining equations in $s[A,B]=[A,[A,B]]$ read
			\begin{equation*}
				-sB_{32} = -B_{31},\quad sB_{31} = -B_{32},\quad -sB_{23} = -B_{13},\quad sB_{13} = -B_{23}.
			\end{equation*}
		This implies that $s^2B_{32}=-B_{32}$ and $s^2 B_{23}=-B_{23}$. Since $s\ne 0$, $B_{23}=B_{32}=0$ and consequently also $B_{13}=B_{31}=0$. Hence, $B$ is again of the form \eqref{eqn_B_vanishing_A_A_B} and this case can also be excluded. We are thus left with the case $r\ne 0$. In that case, by the symmetry properties of the upper left $2\times2$ blocks of $[A,B]$ and $[A,[A,B]]$ we necessarily have $B_{22}=-B_{11}$ and $B_{33}=0$. Suppose that $s=0$. Then the upper left $2\times2$ block of $rB=[A,[A,B]]$ is fulfilled if either $B_{11}=0$ and $B_{12}=0$, or $r=-4$. In the latter case, the remaining equations of $rB=[A,[A,B]]$ imply that $B_{13}=B_{31}=B_{23}=B_{32}=0$. But then $B$ is an element of the Lie algebra of the stabiliser of $(0,0,1)^\mathrm{T}$ which we have excluded by assumption. Thus, for $s=0$, $B$ must be of the form
			\begin{equation}\label{eqn_B_case_r_ne_0_s_0}
				B=\left(\begin{matrix}
					0 & 0 & B_{13}\\
					0 & 0 & B_{23}\\
					B_{31} & B_{32} & 0
				\end{matrix}\right),
			\end{equation}
		and we see that $rB=[A,[A,B]]$ is fulfilled for $B\ne 0$ if and only if $r=-1$. Lastly, if both $r\ne 0$ and $s\ne 0$, again by using the symmetry argument from the case $r\ne 0$ and $s=0$ we obtain $B_{22}=-B_{11}$ and $B_{33}=0$. The diagonal entries of $rB+s[A,B]=[A,[A,B]]$ are fulfilled if and only if $(r+4)B_{11}=2sB_{21}$ and $(r+4)B_{21}=-2sB_{11}$, which implies $-(r+4)^2B_{21} = 2s^2B_{21}$. Hence, $B_{21}=0$, implying $B_{11}=0$. This means that $B$ is again of the form \eqref{eqn_B_case_r_ne_0_s_0}. The remaining equations of $rB+s[A,B]=[A,[A,B]]$ are fulfilled if and only if
			\begin{equation*}
				(r+1)B_{13}=s B_{23},\quad (r+1) B_{23}=-sB_{13},
			\end{equation*}
		and the above equation with rows and columns swapped. We use $r\ne 0$ and $s\ne 0$ to obtain $-(r+1)^2 B_{23}=s^2 B_{23}$ which shows that $B_{23}=0$ and consequently $B_{13}=0$. With the same argument, we also find $B_{32}=B_{31}=0$. But then $B=0$, a contradiction to our assumptions.
		
		Summarising, we have shown that $\{A,B,[A,B]\}$ might only generate a $3$-dimensional Lie subalgebra of $\mathfrak{gl}(3)$ with $A$ as in \eqref{eqn_A_stab_1dim} fulfilling the additional stabiliser condition, if $B$ is of the form \eqref{eqn_B_case_r_ne_0_s_0}. In that case, $B=-[A,[A,B]]$, and we further calculate
			\begin{equation*}
				[B,[A,B]]=\left(\begin{matrix}
					B_{23}B_{31} - B_{13}B_{32} & B_{13}B_{31} + B_{23}B_{32} & 0\\
					-B_{13}B_{31} - B_{23}B_{32} & B_{23}B_{31} - B_{13}B_{32} & 0\\
					0 & 0 & 0
				\end{matrix}\right)
			\end{equation*}
		Hence, $[B,[A,B]]$ and $A$ must be proportional to each other, meaning that $B_{23}B_{31} = B_{13}B_{32}$ must hold. To reduce this problem further, observe that we might act with the stabiliser of the point $(0,0,1)^\mathrm{T}$, which is given by matrices of the form
			\begin{equation*}
				\left(\begin{matrix}
					M & \\
					 & 1
				\end{matrix}\right)\in\mathrm{GL}(3),
			\end{equation*}
		where $M\in\mathrm{SO}(2)$, via the adjoint action on $B$. We can also rescale $B$. By doing so we can reduce our studies to the two cases
			\begin{equation*}
				\widetilde{B}=\left(\begin{matrix}
					0 & 0 & 1\\
					0 & 0 & 0\\
					0 & 0 & 0
				\end{matrix}\right),\quad\text{and}\quad
				\widehat{B}=\left(\begin{matrix}
					0 & 0 & B_{13}\\
					0 & 0 & B_{23}\\
					1 & 0 & 0
				\end{matrix}\right).
			\end{equation*}
		We can immediately exclude the case $\widetilde{B}$ by noting that every non-constant orbit of the action induced by $\widetilde{B}$ is a straight line and using Lemma \ref{lem_no_rays}. We are thus left with the case $B=\widehat{B}$ and need to determine all polynomials $h$ of the form \eqref{eqn_polys_onedim_stabiliser} that are invariant under such a $B$ for some choice of $B_{13},B_{23}\in\mathbb{R}$.
		
		We have by the infinitesimal symmetry property of $B$
			\begin{equation*}
				\D h_{\left(\begin{smallmatrix}x\\y\\z\end{smallmatrix}\right)}\left(B\cdot \left(\begin{smallmatrix}x\\y\\z\end{smallmatrix}\right)\right) =z(B_{13}\partial_x h + B_{23}\partial_y h) + x\partial_z h=0.
			\end{equation*}
		Since the only terms in $z(B_{13}\partial_x h + B_{23}\partial_y h) + x\partial_z h$ that contain summands with odd degrees in $y$ occur in $zB_{23}\partial_y h$, every summand in the latter is in fact of odd degree in $y$, and since $\partial_y h$ is not identically zero by the hyperbolicity of $h$, we conclude that $B_{23}=0$. By rescaling in the $z$-variable and using an overall scale for $B$, we thus might further assume that $B_{13}=\pm1$. If $B_{13}=-1$, the set $\{A,B,[A,B]\}$ generates $\mathfrak{so}(3)$, which has only compact orbits and thus cannot act transitively on a special homogeneous surface. Hence, $B_{13}=1$ so that
			\begin{equation*}
				B=\left(\begin{matrix}
					0 & 0 & 1\\
					0 & 0 & 0\\
					1 & 0 & 0
				\end{matrix}\right).
			\end{equation*}
		
		We will now show that any hyperbolic polynomial $h$ that is invariant under $B$ has even degree $\tau$. To do so, observe that the induced action of $B$ on $h$ and restricting to the plane $\{y=0\}$ commute. This and $B$ acting non-trivially on $\{y=0\}$ means that the connected component of $\{h=1,\, y=0\}$ that contains the point $(0,0,1)^\mathrm{T}$ is a special homogeneous curve, allowing us to use their already available classification \cite[Thm.\,1.1]{L5}. Any connected special homogeneous curve contained in a hyperbolic polynomial of degree $\tau$ is equivalent to the connected component $\overline{\mathcal{H}}$ of $\{\widehat{h}:=x^{\tau-k}y^k=1\}$ that contains the point $(x,y)^\mathrm{T}=(1,1)^\mathrm{T}$ for precisely one $k\in\left\{1,\ldots,\left\lfloor\frac{\tau}{2}\right\rfloor\right\}$. Note that $\overline{\mathcal{H}}\subset\{x>0,\, y>0\}$. The identity component of the symmetry group of $\overline{h}$ is generated by
			\begin{equation*}
				\overline{B}:=\left(\begin{matrix}
					1 & 0\\
					0 & \frac{k-\tau}{k}
				\end{matrix}\right).
			\end{equation*}
		Hence, up to a coordinate change, $\overline{B}$ is precisely
			\begin{equation*}
				B|_{\{y=0\}}=\left(\begin{matrix}
					0 & 1\\
					1 & 0
				\end{matrix}\right),
			\end{equation*}
		and this implies that $1+\frac{k-\tau}{k}=\mathrm{tr}(\overline{B})=\mathrm{tr}\left(B_{\{y=0\}}\right)=0$. This can only be fulfilled if $\tau$ is even and $k=\frac{\tau}{2}$, showing that $\tau$ being even is a necessary requirement for the existence of a hyperbolic polynomial invariant under $A$, $B$, and $[A,B]$. We now make the following ansatz. We claim that for $\tau$ even,
			\begin{equation}\label{eqn_h_final_so1_stab}
				h=\left(z^2-x^2-y^2\right)^{\frac{\tau}{2}}
			\end{equation}
		is a hyperbolic polynomial with the desired symmetries. We see that $h$ is indeed of the form \eqref{eqn_polys_onedim_stabiliser} and we quickly check that $(0,0,1)^\mathrm{T}$ is a hyperbolic point of $h$. Furthermore, the term $z^2-x^2-y^2$ and consequently $h$ are invariant under $\mathrm{SO}(2,1)^{+}$ with generators $A$, $B$, and $[A,B]$. Hence, the connected special homogeneous surface $\mathcal{H}$ is diffeomorphic to $\mathrm{SO}(2,1)^{+}/\mathrm{SO}(2)$. Note that up to switching variables, $h$ in \eqref{eqn_h_final_so1_stab} coincides with the polynomial in Thm. \ref{thm_main} \eqref{thm_main_i}. In order to see that $h$ in \eqref{eqn_h_final_so1_stab} is the unique invariant under $A$, $B$, and $[A,B]$, suppose that there is another such polynomial $\overline{h}$. Then, by construction, $(0,0,1)^\mathrm{T}$ must be a hyperbolic point of $\overline{h}$, implying that $h$ and $\overline{h}$ coincide on the connected component $\overline{H}$ of $\mathrm{hyp}_1(h)$ that contains $(0,0,1)^\mathrm{T}$. By homogeneity, $h$ and $\overline{h}$ must thus coincide on the open set $\mathbb{R}_{>0}\cdot \mathcal{H}$, and from them being polynomials it follows they must coincide on the whole ambient space. Hence, $\overline{h}=h$.
		
		It remains to determine the full symmetry group $G^h$ of $\mathrm{hyp}_1(h)$ with $h$ as in \eqref{eqn_h_final_so1_stab} and to calculate the scalar curvature of $\mathcal{H}$. For $\frac{\tau}{2}$ odd, $G^h$ and the symmetry group of $z^2-x^2-y^2$, that is $\mathrm{O}(2,1)$, coincide. We need to check that we do not get any additional symmetries for $\frac{\tau}{2}$ even. In that case $\{h>0\}$ has three connected components, two of which contain connected special homogeneous surfaces. The third one is given by $\{z^2-x^2-y^2<0\}$. But the negative Hessian of $h$ at any point in that set has signature $(1,2)$, that is one positive and two negative eigenvalues. Hence, $\{z^2-x^2-y^2<0\}$ does not contain a special homogeneous surface. We conclude that for all $\tau\geq 4$, $\tau$ even, $G^h=\mathrm{O}(2,1)$ and $\mathrm{hyp}_1(h)$ has two connected components which are isometric with respect to the restriction of $-\partial^2h$.
		
		In order to calculate the scalar curvature of $\mathrm{hyp}_1(h)$, we will transform $h$ in \eqref{eqn_h_final_so1_stab} to standard form \eqref{eqn_h_standard_form}. But, up to rescaling in $x,y$ and switching variables, $h$ is already of standard form, that is
			\begin{equation*}
				h= z^\tau-\frac{\tau}{2}z^{\tau-2}\left(x^2+y^2\right)+\frac{\tau(\tau-2)}{8}z^{\tau-4}\left(x^2+y^2\right)^2+\text{terms of lower order in }z.
			\end{equation*}
		From the above formula it follows for every $\tau\geq 4$ that the term $P_3$ in \eqref{eqn_h_standard_form} vanishes. Thus, by Lemma \ref{lem_scalar_curvature} and the homogeneity of special homogeneous surfaces we obtain that the scalar curvature $S$ at each point in $\mathrm{hyp}_1(h)$ with respect to the centro-affine metric is given by $S=-2$.
		
	\subsection{Locally free symmetry}
		The action of the automorphism group $G^h$ on a special homogeneous surface $\mathcal{H}\subset\{h=1\}$ is locally free if its stabiliser $\mathrm{stab}^h$ satisfied $\dim(\mathrm{stab}^h)=0$. In that case, $\mathcal{H}$ is isomorphic to a two-dimensional Lie group $G\subset\mathrm{GL}(3)$. The Lie algebra $\mathfrak{g}$ of $G$ is either affine or abelian, and we will study both cases separately. In both cases, we will start with assuming one generator to be of a certain form. Suppose that $\mathfrak{g}$ is generated by $A,B\in\mathfrak{gl}(3)$. We can, without loss of generality after rescaling and acting adjointly with $\mathrm{GL}(3)$, assume that either $A$ has only real eigenvalues and is given by one of
			\begin{align*}
				&A_1=\left(\begin{matrix}
					\lambda & 1 & 0\\
					0 & \lambda & 1\\
					0 & 0 & \lambda
				\end{matrix}\right),\quad\lambda\in\{0,1\},\\
				&A_2=\left(\begin{matrix}
					\lambda & 1 & 0\\
					0 & \lambda & 0\\
					0 & 0 & \mu
				\end{matrix}\right),\quad\lambda\in\{0,1\},\ \mu\in\mathbb{R},\\
				&A_3=\left(\begin{matrix}
					1 & 0 & 0\\
					0 & \lambda & 0\\
					0 & 0 & \mu
				\end{matrix}\right),\quad\lambda,\mu\in\mathbb{R},
			\end{align*}
		or that $A$ has one complex eigenvalue with non-vanishing imaginary part and is given by
			\begin{equation*}
				A_4=\left(\begin{matrix}
					\lambda & -1 & 0\\
					1 & \lambda & 0\\
					0 & 0 & \mu
				\end{matrix}\right),\quad \lambda,\mu\in\mathbb{R}.
			\end{equation*}
		We will now show that some of the above generators can be excluded by using Lemmas \ref{lem_no_rays} and \ref{lem_H_bounded_away_from_0}. To do so, we need to study $e^{tA_i}$ for $1\leq i\leq 4$, $t\in\mathbb{R}$.
		
		For $\lambda=0$, $A_1$ can not be excluded and it does in fact appear in our later classification, cf. \eqref{eqn_a_1_to_5}. But $A_1$ with $\lambda=1$ can be excluded. To see this, we calculate for $A_1$, $\lambda=1$,
			\begin{equation*}
				e^{tA_1}=e^t\left(\begin{matrix}
					1 & t & \frac{t^2}{2}\\
					0 & 1 & t\\
					0 & 0 & 1
				\end{matrix}\right).
			\end{equation*}
		Hence, for $\lambda=1$ and all $p\in\mathbb{R}^3$, we have $\lim\limits_{t\to-\infty} e^{tA_1}p=0$. This in particular holds for all $p\in\mathcal{H}$ and would thereby imply that the origin is in the closure of $\mathcal{H}$. This is a contradiction to Lemma \ref{lem_H_bounded_away_from_0} and we can thus exclude $A_1$ with $\lambda=1$.
		
		For $A_2$, suppose first that $\lambda=0$. Then
			\begin{equation*}
				e^{tA_2}=\left(\begin{matrix}
					1 & t & 0\\
					0 & 1 & 0\\
					0 & 0 & e^{\mu t}
				\end{matrix}\right).
			\end{equation*}
		The existence of a point in the ambient space $\mathbb{R}^3$ that has an orbit that is not a line, which is a necessary condition for hyperbolicity of said point, implies that $\mu\ne0$. We can thus exclude $A_2$ for $\lambda=0$, $\mu=0$. For $\lambda=1$ we have
			\begin{equation*}
				e^{tA_2}=\left(\begin{matrix}
					e^t & te^t & 0\\
					0 & e^t & 0\\
					0 & 0 & e^{\mu t}
				\end{matrix}\right).
			\end{equation*}
		Lemma \ref{lem_H_bounded_away_from_0} implies that $\mu<0$ is a necessary condition for $\mathcal{H}$ to be a connected component of a hyperbolic level set, and we can thus exclude $A_2$ for $\lambda=1$ and $\mu\geq0$.
		
		The generator $A_3$ has as only non-trivial orbits lines for $\lambda=\mu=0$, meaning that we can exclude it by Lemma \ref{lem_no_rays}. We can also exclude $\lambda>0$, $\mu>0$ since
			\begin{equation*}
				e^{tA_3}=\left(\begin{matrix}
					e^t & 0 & 0\\
					0 & e^{\lambda t} & 0\\
					0 & 0 & e^{\mu t}
				\end{matrix}\right),
			\end{equation*}
		which in this case would imply that $0$ is in the closure of $\mathcal{H}$ and, hence, contradicts Lemma \ref{lem_H_bounded_away_from_0}. After switching variables in the ambient space, the only cases that cannot be excluded at this stage are thus $\lambda\in\mathbb{R}$, $\mu<0$. After possibly switching the first and third variable of the ambient space and rescaling, we might assume $\lambda\geq 0$ instead of $\lambda\in\mathbb{R}$.
		
		Lastly, for $A_4$ and $\lambda=0$, neither value for $\mu\in\mathbb{R}$ can be excluded by Lemma \ref{lem_no_rays} or Lemma \ref{lem_H_bounded_away_from_0}. After possibly switching the first and second variable in the ambient space and after an overall sign change, we can for $\lambda=0$ without loss of generality assume that $\mu\leq0$.	Similarly, we can for $\lambda\ne 0$ without loss of generality assume that $\lambda>0$. In that case,
			\begin{equation*}
				e^{tA_4}=\left(\begin{matrix}
					e^t\cos(t) & -e^t\sin(t) & 0\\
					e^t\sin(t) & e^t\cos(t) & 0\\
					0 & 0 & e^{\mu t}
				\end{matrix}\right).
			\end{equation*}
		Hence, $\mu\leq0$ is a necessary condition since for $\mu>0$ we have $\lim\limits_{t\to-\infty}e^{tA_4}p=0$ for all $p\in\mathbb{R}^3$ which would contradict Lemma \ref{lem_H_bounded_away_from_0}. However, for $\mu=0$ and $\lambda\ne 0$, the orbit of any point $p$ not contained in $\mathbb{R}\cdot(0,0,1)^\mathrm{T}$, in particular every hyperbolic point of $h$, would be an unbounded subset of a plane $E$ in $\mathbb{R}^3$. Then $h$, restricted to any line in $E$, would have infinitely many pre-images of $h(p)$, which implies that $h$ needs to be constant on $E$. This contradicts the existence of hyperbolic points of $h$ in $E$. Since the set of hyperbolic points of $h$ is open, this would contradict the hyperbolicity of $h$. Hence, for $\lambda \ne 0$ we can exclude $\mu=0$.
		
		Summarising, we have the following possibilities for one generator of the Lie algebra $\mathfrak{g}$:
			\begin{align}
				&a_1=\left(\begin{matrix}
					0 & 1 & 0\\
					0 & 0 & 1\\
					0 & 0 & 0
				\end{matrix}\right),\quad
				a_2=\left(\begin{matrix}
					0 & 1 & 0\\
					0 & 0 & 0\\
					0 & 0 & \mu
				\end{matrix}\right),\mu\in\mathbb{R}\setminus\{0\},\quad
				a_3=\left(\begin{matrix}
					1 & 1 & 0\\
					0 & 1 & 0\\
					0 & 0 & \mu
				\end{matrix}\right),\ \mu<0,\notag\\
				&a_4=\left(\begin{matrix}
					1 & 0 & 0\\
					0 & \lambda & 0\\
					0 & 0 & \mu
				\end{matrix}\right),\ \lambda\geq0,\mu<0,\quad
				a_5=\left(\begin{matrix}
					\lambda & -1 & 0\\
					1 & \lambda & 0\\
					0 & 0 & \mu
				\end{matrix}\right)\ \lambda\geq0,\mu<0,\notag\\
				&a_6=\left(\begin{matrix}
					0 & -1 & 0\\
					1 & 0 & 0\\
					0 & 0 & 0
				\end{matrix}\right).\label{eqn_a_1_to_5}
			\end{align}
		
		\subsubsection{$\mathfrak{g}$ non-abelian}
			Suppose that $\mathfrak{g}$ is non-abelian. Then, for dimensional reasons, $\mathfrak{g}$ must be the affine two-dimensional Lie algebra. Choose a basis $A,B$ of $\mathfrak{g}$ with $[A,B]=A$. Then $A$ is linearly equivalent to some $a_i$, $1\leq i\leq 6$. In the first step we need to find all $B\in\mathfrak{g}$ linearly independent from a given $A=a_i$, $1\leq i\leq 6$, in \eqref{eqn_a_1_to_5}, so that $[a_i,B]=a_i$. We can immediately exclude all $a_i$ for $2\leq i\leq 5$. This follows from the fact that in these cases $(a_i)_{33}\ne 0$, but for all $B=(B_{ij})$
				\begin{equation*}
					{([a_i,B])}_{33}=0.
				\end{equation*}
			Hence, we only need to consider $a_1$ and $a_6$. We start with $a_1$ and find that $[a_1,B]=a_1$ holds if and only if $B$ is of the form
				\begin{equation*}
					B=\left(\begin{matrix}
						B_{11} & B_{12} & B_{13}\\
						0 & B_{11}+1 & B_{12}\\
						0 & 0 & B_{11}+2
					\end{matrix}\right).
				\end{equation*}
			We can without loss of generality assume that $B_{12}=0$ by replacing $B\to B-B_{12}a_1$. Then we can write $B$ as
				\begin{equation}\label{eqn_B_affine}
					B=\left(\begin{matrix}
						b-1 & 0 & c\\
						0 & b & 0\\
						0 & 0 & b+1
					\end{matrix}\right),\quad b,c\in\mathbb{R}.
				\end{equation}
			and we see that $B$ has three distinct real eigenvalues $b-1,b,b+1$, independent of $b,c\in\mathbb{R}$. Thus, Lemma \ref{lem_H_bounded_away_from_0} restricts $b$ so that at least one of these eigenvalues is positive and one is negative and, hence, $b\in(-1,1)$ is a necessary condition. In the next step we will determine all homogeneous polynomials $h$ of degree at least three that are invariant under $a_1$ and $B$, and check which ones are hyperbolic.
			
			A homogeneous polynomial
				\begin{equation}\label{eqn_h_form_affine_starting_point}
					h=\sum\limits_{k=0}^\tau\sum\limits_{\ell=0}^k P_{k \ell} z^{\tau-k}x^{k-\ell}y^\ell,
				\end{equation}
			$P_{k \ell}\in\mathbb{R}$ for all $k\in\{0,\ldots,\tau\}$, $\ell\in\{0,\ldots,k\}$, is invariant under $a_1$ if and only if
				\begin{align*}
					&\sum\limits_{k=1}^\tau\sum\limits_{\ell=0}^k (k-\ell)P_{k \ell}z^{\tau-k}x^{k-\ell-1}y^{\ell+1}\\
					&+ \sum\limits_{k=1}^\tau\sum\limits_{\ell=1}^k\ell P_{k \ell} z^{\tau-k+1}x^{k-\ell}y^{\ell-1}=0.
				\end{align*}
			By shifting indices and combining sums, we find that the above equation is equivalent to
				\begin{align}
					&\sum\limits_{k=1}^{\tau-1}\sum\limits_{\ell=1}^k\bigg(\underbrace{(k-\ell+1)}_{\ne 0} P_{k (\ell-1)} + \underbrace{(\ell+1)}_{\ne 0} P_{(k+1) (\ell+1)}\bigg) z^{\tau-k}x^{k-\ell}y^\ell\label{eqn_P_recursion}\\
					&+\sum\limits_{\ell=1}^\tau \underbrace{(\tau-\ell+1)}_{\ne 0} P_{\tau (\ell-1)} x^{\tau-\ell}y^{\ell} + \sum\limits_{k=1}^{\tau-1} P_{(k+1) 1}z^{\tau-k}x^k + P_{1 1} z^\tau =0.\notag
				\end{align}
			Hence, $P_{k 1}=0$ for all $k\in\{1,\ldots,\tau\}$ and $P_{\tau\ell}=0$ for all $\ell\in\{0,\tau-1\}$, see Figures \ref{fig_kl_1_odd}, \ref{fig_kl_1_even} for a visualisation of which indices must vanish for an odd and an even example of $\tau$.\\
			\begin{minipage}{0.45\textwidth}
				\begin{figure}[H]
					\begin{center}
						\begin{tikzpicture}
							\begin{axis}[
								axis x line=center,
								axis y line=center,
								xtick={0,1,...,7},
								ytick={0,1,...,7},
								xlabel={$k$},
								ylabel={$\ell$},
								xlabel style={below right},
								ylabel style={above left},
								xmin=-0.5,
								xmax=7.5,
								ymin=-0.5,
								ymax=7.5,
								scatter/classes={
									a={mark=o,draw=black,mark size=3pt},b={mark=*,draw=black,mark size=3pt}
								}
							]
							\addplot[scatter,only marks,scatter src=explicit symbolic]
							table[meta=label]{
								x y label
								0 0 a
								1 0 a
								2 0 a
								3 0 a
								4 0 a
								5 0 a
								6 0 a
								7 0 b
								1 1 b
								2 1 b
								3 1 b
								4 1 b
								5 1 b
								6 1 b
								7 1 b
								2 2 a
								3 2 a
								4 2 a
								5 2 a
								6 2 a
								7 2 b
								3 3 a
								4 3 a
								5 3 a
								6 3 a
								7 3 b
								4 4 a
								5 4 a
								6 4 a
								7 4 b
								5 5 a
								6 5 a
								7 5 b
								6 6 a
								7 6 b
								7 7 b
								    };
							\end{axis}
						\end{tikzpicture}
						\caption{
							Table of the indices $(k,\ell)$ for $\tau=7$ in \eqref{eqn_h_form_affine_starting_point} that are marked with a solid black dot represent $P_{k \ell}=0$.
						}
						\label{fig_kl_1_odd}
					\end{center}
				\end{figure}
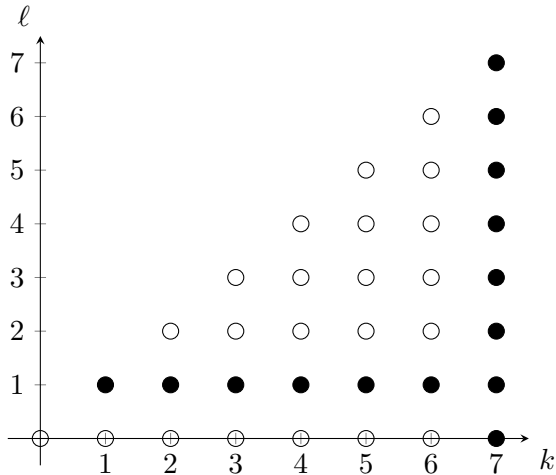
			\end{minipage}
			\hfill
			\begin{minipage}{0.45\textwidth}
				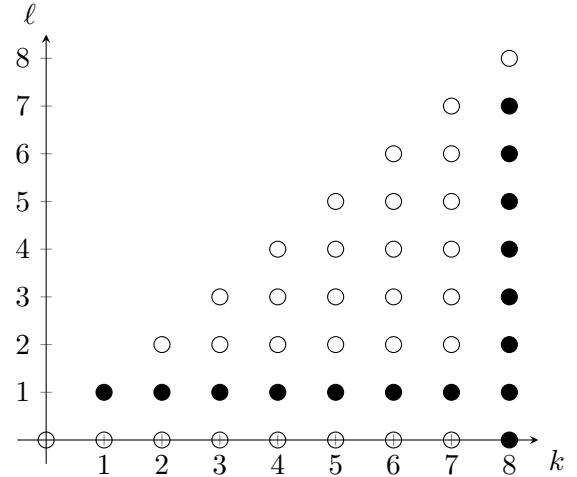
\begin{figure}[H]
					\begin{center}
						\begin{tikzpicture}
							\begin{axis}[
								axis x line=center,
								axis y line=center,
								xtick={0,1,...,8},
								ytick={0,1,...,8},
								xlabel={$k$},
								ylabel={$\ell$},
								xlabel style={below right},
								ylabel style={above left},
								xmin=-0.5,
								xmax=8.5,
								ymin=-0.5,
								ymax=8.5,
								scatter/classes={
									a={mark=o,draw=black,mark size=3pt},b={mark=*,draw=black,mark size=3pt}
								}
							]
							\addplot[scatter,only marks,scatter src=explicit symbolic]
							table[meta=label]{
								x y label
								0 0 a
								1 0 a
								2 0 a
								3 0 a
								4 0 a
								5 0 a
								6 0 a
								7 0 a
								8 0 b
								1 1 b
								2 1 b
								3 1 b
								4 1 b
								5 1 b
								6 1 b
								7 1 b
								8 1 b
								2 2 a
								3 2 a
								4 2 a
								5 2 a
								6 2 a
								7 2 a
								8 2 b
								3 3 a
								4 3 a
								5 3 a
								6 3 a
								7 3 a
								8 3 b
								4 4 a
								5 4 a
								6 4 a
								7 4 a
								8 4 b
								5 5 a
								6 5 a
								7 5 a
								8 5 b
								6 6 a
								7 6 a
								8 6 b
								7 7 a
								8 7 b
								8 8 a
									};
							\end{axis}
						\end{tikzpicture}
						\caption{
							Compared with Figure \ref{fig_kl_1_odd}, note that $P_{\tau\tau}$ need not necessarily vanish at this point for $\tau$ even, in this example $\tau=8$.
						}
						\label{fig_kl_1_even}
					\end{center}
				\end{figure}
			\end{minipage}
			\\
			\noindent
			From
				\begin{equation}\label{eqn_k_ell_implications}
					\underbrace{(k-\ell+1)}_{\ne 0} P_{k (\ell-1)} + \underbrace{(\ell+1)}_{\ne 0} P_{(k+1) (\ell+1)}=0
				\end{equation}
			for all $k\in\{1,\ldots,\tau-1\}$, $\ell\in\{1,\ldots,k\}$ we inductively obtain $P_{k \ell}=0$ for all odd $\ell$ with $\ell\leq\tau$, and all $k\in\{\ell,\ldots,\tau\}$, see Figures \ref{fig_kl_2_odd}, \ref{fig_kl_2_even}.\\
			\begin{minipage}{0.45\textwidth}
				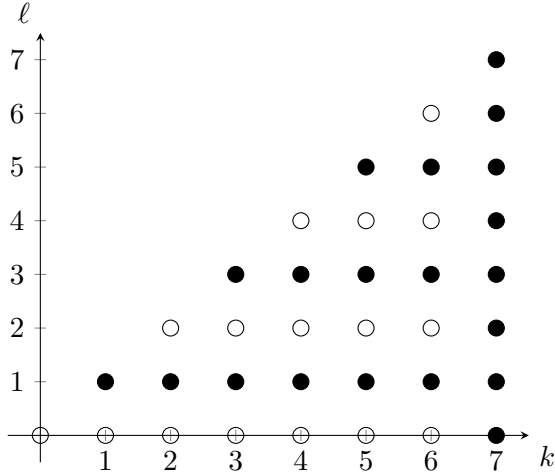
\begin{figure}[H]
					\begin{center}
						\begin{tikzpicture}
							\begin{axis}[
								axis x line=center,
								axis y line=center,
								xtick={0,1,...,7},
								ytick={0,1,...,7},
								xlabel={$k$},
								ylabel={$\ell$},
								xlabel style={below right},
								ylabel style={above left},
								xmin=-0.5,
								xmax=7.5,
								ymin=-0.5,
								ymax=7.5,
								scatter/classes={
									a={mark=o,draw=black,mark size=3pt},b={mark=*,draw=black,mark size=3pt}
								}
							]
							\addplot[scatter,only marks,scatter src=explicit symbolic]
							table[meta=label]{
								x y label
								0 0 a
								1 0 a
								2 0 a
								3 0 a
								4 0 a
								5 0 a
								6 0 a
								7 0 b
								1 1 b
								2 1 b
								3 1 b
								4 1 b
								5 1 b
								6 1 b
								7 1 b
								2 2 a
								3 2 a
								4 2 a
								5 2 a
								6 2 a
								7 2 b
								3 3 b
								4 3 b
								5 3 b
								6 3 b
								7 3 b
								4 4 a
								5 4 a
								6 4 a
								7 4 b
								5 5 b
								6 5 b
								7 5 b
								6 6 a
								7 6 b
								7 7 b
								    };
							\end{axis}
						\end{tikzpicture}
						\caption{
							From \eqref{eqn_k_ell_implications} we obtain that more values $P_{k\ell}$ must vanish in comparison with Figure \ref{fig_kl_1_odd}.
						}
						\label{fig_kl_2_odd}
					\end{center}
				\end{figure}
			\end{minipage}
			\hfill
			\begin{minipage}{0.45\textwidth}
				\begin{figure}[H]
					\begin{center}
						\begin{tikzpicture}
							\begin{axis}[
								axis x line=center,
								axis y line=center,
								xtick={0,1,...,8},
								ytick={0,1,...,8},
								xlabel={$k$},
								ylabel={$\ell$},
								xlabel style={below right},
								ylabel style={above left},
								xmin=-0.5,
								xmax=8.5,
								ymin=-0.5,
								ymax=8.5,
								scatter/classes={
									a={mark=o,draw=black,mark size=3pt},b={mark=*,draw=black,mark size=3pt}
								}
							]
							\addplot[scatter,only marks,scatter src=explicit symbolic]
							table[meta=label]{
								x y label
								0 0 a
								1 0 a
								2 0 a
								3 0 a
								4 0 a
								5 0 a
								6 0 a
								7 0 a
								8 0 b
								1 1 b
								2 1 b
								3 1 b
								4 1 b
								5 1 b
								6 1 b
								7 1 b
								8 1 b
								2 2 a
								3 2 a
								4 2 a
								5 2 a
								6 2 a
								7 2 a
								8 2 b
								3 3 b
								4 3 b
								5 3 b
								6 3 b
								7 3 b
								8 3 b
								4 4 a
								5 4 a
								6 4 a
								7 4 a
								8 4 b
								5 5 b
								6 5 b
								7 5 b
								8 5 b
								6 6 a
								7 6 a
								8 6 b
								7 7 b
								8 7 b
								8 8 a
   									};
							\end{axis}
						\end{tikzpicture}
						\caption{
							As for $\tau$ odd in Figure \ref{fig_kl_2_odd},  $P_{k\ell}$ vanishes for more indices $(k,\ell)$ for $\tau$ even.
						}
						\label{fig_kl_2_even}
					\end{center}
				\end{figure}
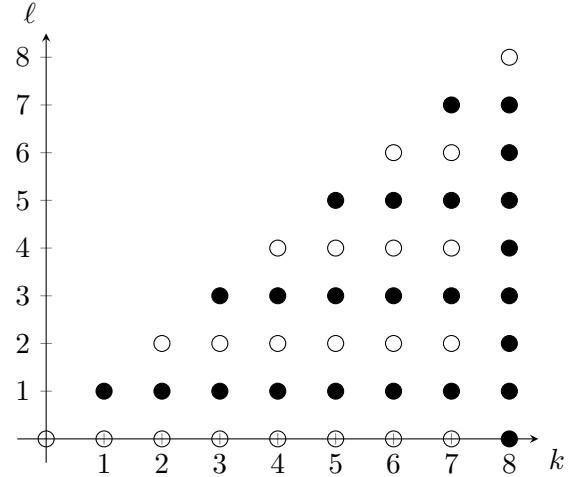
			\end{minipage}
			\\
			\noindent
			Similarly, we find that $P_{k\ell}=0$ for $\ell$ even, $\ell<\tau$, and all $k\in\left\{\tau-\lfloor\frac{\tau-\ell-1}{2}\rfloor,\ldots,\tau\right\}$, see Figures \ref{fig_kl_3_odd}, \ref{fig_kl_3_even}.\\
			\begin{minipage}{0.45\textwidth}
				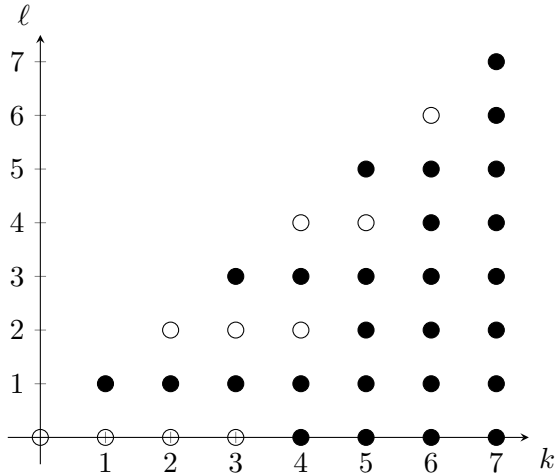
\begin{figure}[H]
					\begin{center}
						\begin{tikzpicture}
							\begin{axis}[
								axis x line=center,
								axis y line=center,
								xtick={0,1,...,7},
								ytick={0,1,...,7},
								xlabel={$k$},
								ylabel={$\ell$},
								xlabel style={below right},
								ylabel style={above left},
								xmin=-0.5,
								xmax=7.5,
								ymin=-0.5,
								ymax=7.5,
								scatter/classes={
									a={mark=o,draw=black,mark size=3pt},b={mark=*,draw=black,mark size=3pt}
								}
							]
							\addplot[scatter,only marks,scatter src=explicit symbolic]
							table[meta=label]{
								x y label
								0 0 a
								1 0 a
								2 0 a
								3 0 a
								4 0 b
								5 0 b
								6 0 b
								7 0 b
								1 1 b
								2 1 b
								3 1 b
								4 1 b
								5 1 b
								6 1 b
								7 1 b
								2 2 a
								3 2 a
								4 2 a
								5 2 b
								6 2 b
								7 2 b
								3 3 b
								4 3 b
								5 3 b
								6 3 b
								7 3 b
								4 4 a
								5 4 a
								6 4 b
								7 4 b
								5 5 b
								6 5 b
								7 5 b
								6 6 a
								7 6 b
								7 7 b
								    };
							\end{axis}
						\end{tikzpicture}
						\caption{
							Repeating the argument for Figure \ref{fig_kl_2_odd} from \eqref{eqn_k_ell_implications}, we obtain more indices $(k,\ell)$ for with $P_{k\ell}=0$ must hold.
						}
						\label{fig_kl_3_odd}
					\end{center}
				\end{figure}
			\end{minipage}
			\hfill
			\begin{minipage}{0.45\textwidth}
				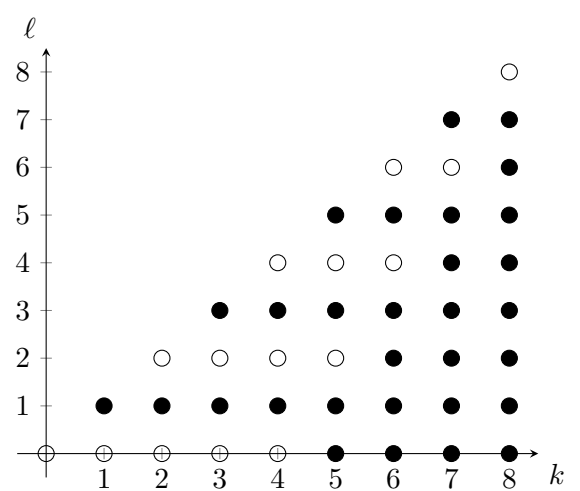
\begin{figure}[H]
					\begin{center}
						\begin{tikzpicture}
							\begin{axis}[
								axis x line=center,
								axis y line=center,
								xtick={0,1,...,8},
								ytick={0,1,...,8},
								xlabel={$k$},
								ylabel={$\ell$},
								xlabel style={below right},
								ylabel style={above left},
								xmin=-0.5,
								xmax=8.5,
								ymin=-0.5,
								ymax=8.5,
								scatter/classes={
									a={mark=o,draw=black,mark size=3pt},b={mark=*,draw=black,mark size=3pt}
								}
							]
							\addplot[scatter,only marks,scatter src=explicit symbolic]
							table[meta=label]{
								x y label
								0 0 a
								1 0 a
								2 0 a
								3 0 a
								4 0 a
								5 0 b
								6 0 b
								7 0 b
								8 0 b
								1 1 b
								2 1 b
								3 1 b
								4 1 b
								5 1 b
								6 1 b
								7 1 b
								8 1 b
								2 2 a
								3 2 a
								4 2 a
								5 2 a
								6 2 b
								7 2 b
								8 2 b
								3 3 b
								4 3 b
								5 3 b
								6 3 b
								7 3 b
								8 3 b
								4 4 a
								5 4 a
								6 4 a
								7 4 b
								8 4 b
								5 5 b
								6 5 b
								7 5 b
								8 5 b
								6 6 a
								7 6 a
								8 6 b
								7 7 b
								8 7 b
								8 8 a
   									};
							\end{axis}
						\end{tikzpicture}
						\caption{
							Analogous to Figure \ref{fig_kl_3_odd} and $\tau$ odd, we obtain vanishing conditions for $P_{k\ell}$ for $\tau$ even.
						}
						\label{fig_kl_3_even}
					\end{center}
				\end{figure}
			\end{minipage}
			\\
			\noindent
			The remaining sets of coefficients for $k$ even, $0\leq k\leq \tau$,
				\begin{equation*}
					\left\{P_{(k-i)\,(k-2i)}\ \left|\ 0\leq i\leq \frac{k}{2}\right.\right\},
				\end{equation*}
			are each determined by the choice of the value $P_{kk}\in\mathbb{R}$, respectively. Hence, the number of free variables in $h$ for it to be invariant under $a_1$ is $1+\lfloor\frac{\tau}{2}\rfloor$. Now, instead of studying when $h$ is invariant under $B$ of the form \eqref{eqn_B_affine}, observe that we can diagonalise $B$ via $C:=\left(\begin{smallmatrix} 1 & 0 & \frac{c}{2}\\ 0 & 1 & 0\\ 0 & 0 & 1 \end{smallmatrix}\right)$ and obtain, independent of $c\in\mathbb{R}$,
				\begin{equation*}
					C^{-1} B C=\left(\begin{smallmatrix}
						b-1 & 0 & 0\\
						0 & b & 0\\
						0 & 0 & b+1
					\end{smallmatrix}\right),\quad C^{-1} a_1 C =a_1.
				\end{equation*}
			Thus we can change basis so that $c=0$ in $B$. Note that while $a_1$ does not preserve the eigenspaces of $B$ spanned by the column vectors of $C$, we find that $a_1$ instead either maps them to zero, or shifts eigenspaces of $B$ to the previous one when numbering by column of $C$. Now for $h$ to be invariant under $B$ for $c=0$, we note that for any monomial $M$, $\D M\left(B\cdot\left(\begin{smallmatrix}x\\y\\z\end{smallmatrix}\right)\right)\in\mathbb{R}\cdot M$, implying that for $M=z^{\tau-k}x^{k-\ell}y^\ell$, $0\leq k\leq \tau$, $0\leq k\leq \ell$, $M$ is invariant under $B$ if and only if
				\begin{equation*}
					b=b(k,\ell)= \frac{-\tau+2k-\ell}{\tau}.
				\end{equation*}
			Hence, $h$ is invariant under $B$ if and only if all of its monomials are invariant under $B$.
			The condition $b\in(-1,1)$ thus implies $(k,\ell)\ne (0,0)$ and $(k,\ell)\ne (\tau,0)$. We further find that $b(k,\ell)=b(K,L)$ if and only if $(K,L)$ can be written as
				\begin{equation*}
					(K,L)=(k+m,\ell+2m)
				\end{equation*}
			for some $m\in\mathbb{Z}$. Hence, $h$ is invariant under $B$ if and only if it is of the form
				\begin{equation}\label{eqn_h_k_ell_final_Ps}
					h=\sum\limits_{m=0}^{\mathrm{min}\{k,\tau-k,\lfloor\frac{\tau}{2}\rfloor\}} P_{(k+m)\,2m}z^{\tau-k-m}x^{k-m}y^{2m}
				\end{equation}
			with $k\in\{1,\ldots,\tau-1\}$ fixed and $P_{(k+m)\,2m}\in\mathbb{R}$ for all $0\leq m\leq \mathrm{min}\{k,\tau-k,\lfloor\frac{\tau}{2}\rfloor\}$. See Figures \ref{fig_kl_4_odd}, \ref{fig_kl_4_even}. for a visual representation of the potentially non-zero coefficients $P_{k\ell}$ for two examples of $h$.\\
			\begin{minipage}{0.45\textwidth}
				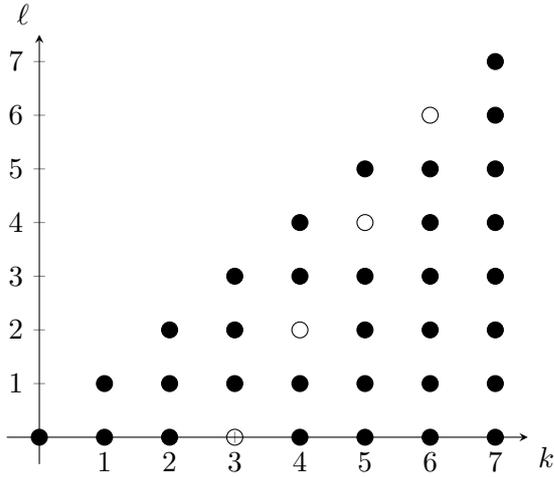
\begin{figure}[H]
					\begin{center}
						\begin{tikzpicture}
							\begin{axis}[
								axis x line=center,
								axis y line=center,
								xtick={0,1,...,7},
								ytick={0,1,...,7},
								xlabel={$k$},
								ylabel={$\ell$},
								xlabel style={below right},
								ylabel style={above left},
								xmin=-0.5,
								xmax=7.5,
								ymin=-0.5,
								ymax=7.5,
								scatter/classes={
									a={mark=o,draw=black,mark size=3pt},b={mark=*,draw=black,mark size=3pt}
								}
							]
							\addplot[scatter,only marks,scatter src=explicit symbolic]
							table[meta=label]{
								x y label
								0 0 b
								1 0 b
								2 0 b
								3 0 a
								4 0 b
								5 0 b
								6 0 b
								7 0 b
								1 1 b
								2 1 b
								3 1 b
								4 1 b
								5 1 b
								6 1 b
								7 1 b
								2 2 b
								3 2 b
								4 2 a
								5 2 b
								6 2 b
								7 2 b
								3 3 b
								4 3 b
								5 3 b
								6 3 b
								7 3 b
								4 4 b
								5 4 a
								6 4 b
								7 4 b
								5 5 b
								6 5 b
								7 5 b
								6 6 a
								7 6 b
								7 7 b
								    };
							\end{axis}
						\end{tikzpicture}
						\caption{
							The potentially non-vanishing coefficients $P_{k\ell}$ in $h$ \eqref{eqn_h_k_ell_final_Ps} for $\tau=7$, $k=3$, marked with white dots.
						}
						\label{fig_kl_4_odd}
					\end{center}
				\end{figure}
			\end{minipage}
			\hfill
			\begin{minipage}{0.45\textwidth}
				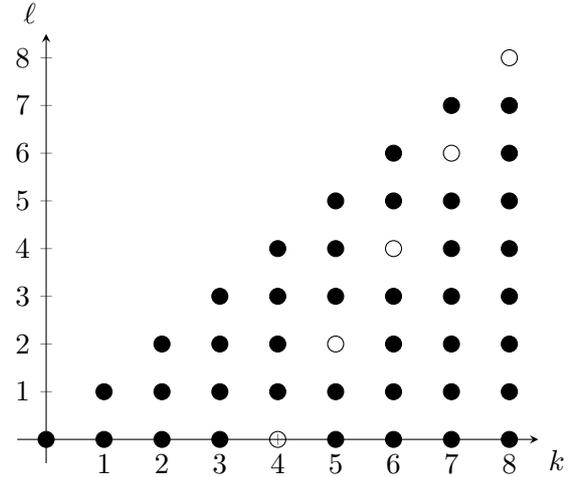
\begin{figure}[H]
					\begin{center}
						\begin{tikzpicture}
							\begin{axis}[
								axis x line=center,
								axis y line=center,
								xtick={0,1,...,8},
								ytick={0,1,...,8},
								xlabel={$k$},
								ylabel={$\ell$},
								xlabel style={below right},
								ylabel style={above left},
								xmin=-0.5,
								xmax=8.5,
								ymin=-0.5,
								ymax=8.5,
								scatter/classes={
									a={mark=o,draw=black,mark size=3pt},b={mark=*,draw=black,mark size=3pt}
								}
							]
							\addplot[scatter,only marks,scatter src=explicit symbolic]
							table[meta=label]{
								x y label
								0 0 b
								1 0 b
								2 0 b
								3 0 b
								4 0 a
								5 0 b
								6 0 b
								7 0 b
								8 0 b
								1 1 b
								2 1 b
								3 1 b
								4 1 b
								5 1 b
								6 1 b
								7 1 b
								8 1 b
								2 2 b
								3 2 b
								4 2 b
								5 2 a
								6 2 b
								7 2 b
								8 2 b
								3 3 b
								4 3 b
								5 3 b
								6 3 b
								7 3 b
								8 3 b
								4 4 b
								5 4 b
								6 4 a
								7 4 b
								8 4 b
								5 5 b
								6 5 b
								7 5 b
								8 5 b
								6 6 b
								7 6 a
								8 6 b
								7 7 b
								8 7 b
								8 8 a
   									};
							\end{axis}
						\end{tikzpicture}
						\caption{
							The analogue picture of Figure \ref{fig_kl_4_odd} for $\tau=8$, $k=4$.
						}
						\label{fig_kl_4_even}
					\end{center}
				\end{figure}
			\end{minipage}
			\\
			\noindent
			By combining our results, we find that $h$ is invariant under $a_1$ and $B$ if and only if it can be written as
				\begin{equation*}
					h=\sum\limits_{m=0}^{k} P_{(k+m)\,2m}z^{\tau-k-m}x^{k-m}y^{2m}
				\end{equation*}
			for some fixed $k\in\left\{1,\ldots,\lfloor\frac{\tau}{2}\rfloor\right\}$ with $b=\frac{-\tau+2k}{\tau}$, and $P_{(k+m)\,2m}$ fulfilling
				\begin{equation}\label{eqn_P_explicit_form_nonabelian_case}
					P_{(k+m)\,2m} = \left(-\frac{1}{2}\right)^{m} \binom{m}{k} P_{k\, 0}
				\end{equation}
			for all $1\leq m\leq k$ with $P_{k\,0}\in\mathbb{R}$ fixed. Since $1\leq k\leq \lfloor\frac{\tau}{2}\rfloor$, we might further rewrite $h$ as
				\begin{equation}\label{eqn_h_affine_initial_coords}
					h= P_{k\, 0} z^{\tau-2k}\left(zx-\tfrac{1}{2}y^2\right)^k.
				\end{equation}
			Now we need to check which of the above polynomials are hyperbolic and thereafter which are pairwise inequivalent, and we need to describe the connected components of $\mathrm{hyp}_1(h)$. By rescaling we might assume without loss of generality that $P_{k\,0}=\pm 1$, and changing coordinates $(x,y,z)\to \left(x-z,x+z,\sqrt{2}y\right)$ transforms $h$ to
				\begin{equation*}
					h= \pm (x+z)^{\tau-2k}\left(x^2-y^2-z^2\right)^k.
				\end{equation*}
			Observe that in any case
				\begin{equation*}
					\{h=0\}=\{x+z=0\}\cup\{x^2-y^2-z^2=0\},
				\end{equation*}
			meaning that the complement of $\{h=0\}$ in the ambient space $\mathbb{R}^3$ has four connected components. Only two of those are cones, namely the ones containing the points $(1,0,0)^\mathrm{T}$ and $(-1,0,0)^\mathrm{T}$. Thus, in order to check which of the above polynomials are hyperbolic, it suffices to check whether at least one of the aforementioned cones contain a hyperbolic point.
			
			We start with the case of $\tau$ being odd. Then $k<\frac{\tau}{2}$ is equivalent to $k\leq \lfloor\frac{\tau}{2}\rfloor$, and an overall sign change $(x,y,z)\to(-x,-y,-z)$ maps $h$ to $-h$, independent of $k$. Hence we might assume $P_{k\,0}=1$, so that
				\begin{equation}\label{eqn_h_affine_nostab}
					h= (x+z)^{\tau-2k}\left(x^2-y^2-z^2\right)^k.
				\end{equation}
			This further shows that if $h$ is hyperbolic, $\mathrm{hyp}_1(h)$ has one connected component. For $p=(1,0,0)^\mathrm{T}$ we then check that $h(p)=1$ and
				\begin{equation}\label{eqn_affine_hyp_check}
					-\partial^2h_p = \left(\begin{smallmatrix}
						-\tau(\tau-1) & 0 & (\tau-1)(-\tau+2k)\\
						0 & 2k & 0\\
						(\tau-1)(-\tau+2k) & 0 & -\tau(\tau-1)+4k(\tau-k)
					\end{smallmatrix}\right).
				\end{equation}
			Using $\tau\geq 3$, $1\leq k<\frac{\tau}{2}$, and $\mathrm{det}\left(-\partial^2h_p\right)=-8k^2(\tau-1)(\tau-k)<0$, we obtain that $-\partial^2h_p$ has Minkowski signature, so $p$ is in fact a hyperbolic point of $h$. The single connected component of $\mathrm{hyp}_1(h)$ is precisely given by the connected component of $\{h=1\}$ that contains $p$. This follows from the fact connected homogeneous special surfaces span a cone, and the only other potential candidate for a connected component of $\mathrm{hyp}(h)$ would have to contain $-p$. But since $\tau$ is odd, $h(-p)=-1$. The full automorphism group $G^h$ of $h$ is thus generated by the stabiliser of $p$, $\mathrm{stab}^h_p$, and the connected component of the identity transformation, $G^h_0$. The latter is generated by $a_1$ and $B$ by construction. Any element of $\mathrm{stab}^h_p$ is in particular an automorphism of the set $\{h=0\}$, and more precisely must preserve $\{x+z=0\}$ and $\{x^2-y^2-z^2=0\}$, and their intersection $\{x+z=0,\ y=0\}=\mathrm{span}\left\{\left(\begin{smallmatrix}1\\0\\-1\end{smallmatrix}\right)\right\}$. A matrix $A\in\mathrm{GL}(3)$ maps $p$ to itself and leaves $\{x+z=0\}$ and $\{x+z=0,\ y=0\}=\mathrm{span}\left\{\left(\begin{smallmatrix}1\\0\\-1\end{smallmatrix}\right)\right\}$ invariant if and only if it is of the form
				\begin{equation*}
					A=\left(\begin{matrix}
						1 & u & v\\
						0 & w & 0\\
						0 & -u & 1-v
					\end{matrix}\right)
				\end{equation*}
			for some $u,v,w\in\mathbb{R}^3$, such that $A$ is invertible. We verify that $A$ preserves $\{x^2-y^2-z^2=0\}$ if and only if $u=v=0$ and $w=\pm 1$. Hence, $\mathrm{stab}^h_p$ contains only the identity and the transformation $y\to -y$. They commute with all elements of $G^h_0$ that are generated by
				\begin{equation*}
					N^{-1}BN=\left(\begin{matrix}
						b & 0 & 1\\
						0 & b & 0\\
						1 & 0 & b
					\end{matrix}\right),
				\end{equation*}
			but do not commute with elements generated by
				\begin{equation*}
					N^{-1}a_1N=\frac{1}{\sqrt{2}}\left(\begin{matrix}
						0 & 1 & 0\\
						1 & 0 & 1\\
						0 & -1 & 0
					\end{matrix}\right),
				\end{equation*}
			where $N$ describes the coordinate transformation $(x,y,z)\to(x-z,\sqrt{2}y,x+z)$. Hence, $G^h=G^h_0\ltimes \mathbb{Z}_2$.
			
			Next let $\tau$ be even. In this case we can exclude $k=\lfloor\frac{\tau}{2}\rfloor=\frac{\tau}{2}$ since that will lead to the one-dimensional stabiliser case studied in Section \ref{sect_1dimstab}. So from here on we will assume that $1\leq k<\frac{\tau}{2}$. If $k$ is additionally even, for $\{h>0\}$ to not be empty $P_{k\,0}=1$ is a necessary requirement, meaning that $h$ is again of the form \eqref{eqn_h_affine_nostab}. In general, if $P_{k\,0}=1$, we check that $p=(1,0,0)^\mathrm{T}$ is a hyperbolic point of $h$ just as we did for $\tau$ odd. In this case, $\mathrm{hyp}_1(h)$ has two isometric connected components that are related by $(x,y,z)\to (-x,-y,-z)$. The negative identity transformation commutes with any element in $G^h$, and we find similar to the case $\tau$ odd that $G^h\cong \left(G^h_0\ltimes \mathbb{Z}_2\right)\times\mathbb{Z}_2$. If $\tau$ is even and $k$ is odd, we obtain the same result as for $\tau$ even and $k$ even if $P_{k\,0}=1$. The missing case we need to study is thus $\tau$ even, $k$ odd, and $P_{k\,0}=-1$. The two connected components of the complement of $\{h=0\}$ that are cones contain the points $p$ and $-p$, respectively. In both cases however, $h(p)=h(-p)=-1$. Hence, neither cone contains hyperbolic points of $h$, and by the supposed homogeneity and consequent property that connected components of $\mathrm{hyp}_1(h)$ are closed in the ambient space \cite[Prop.\,1.8]{CNS}, we obtain that $\{h=1\}$ does not contain a special homogeneous surface.
			
			It remains to show that the polynomials of the form $h=(x+z)^{\tau-2k}\left(x^2-y^2-z^2\right)^k$, $1\leq k< \frac{\tau}{2}$, are pairwise inequivalent. To see this, recall that a supposed linear transformation mapping $h$ corresponding to $k$ to $\overline{h}$ corresponding to $\overline{k}\ne k$ must restrict to an automorphism of the vector subspace $\{x+z=0\}\subset\mathbb{R}^3$ that furthermore restricts to an automorphism of $\mathrm{span}\left\{\left(\begin{smallmatrix}1\\ 0\\ -1\end{smallmatrix}\right)\right\}$. Observe that $h$ has a zero of degree $\tau-2k$ in every point $\{x+z=0\}\cap\{y\ne0\}$, but $\overline{h}$ has a zero of degree $\tau-2\overline{k}\ne \tau-2k$ in every such point. This excludes $h$ and $\overline{h}$ being equivalent.
			
			In order to determine the scalar curvature of the connected components of $\mathrm{hyp}_1(h)$ with respect to the centro-affine metric $g$, we will construct a left-invariant orthonormal frame $\{X,Y\}$ of $T\mathcal{H}$. We will work in our initial linear coordinates, so that
				\begin{equation*}\label{eqn_h_affine_initial_coords_P0k_1}
					h=z^{\tau-2k}\left(xz-\frac{1}{2}y^2\right)^k
				\end{equation*}
			as in \eqref{eqn_h_affine_initial_coords}. In these coordinates, it follows from \eqref{eqn_affine_hyp_check} that the point $q:=(1,0,1)^\mathrm{T}$ with $h(q)=1$ is hyperbolic. We calculate
				\begin{equation*}
					a_1q=\left(\begin{smallmatrix}0\\1\\0\end{smallmatrix}\right),\quad Bq=\left(\begin{smallmatrix}b-1\\0\\b+1\end{smallmatrix}\right),
				\end{equation*}
			where we recall that $b=\frac{-\tau+2k}{\tau}$, and verify that $-\partial^2h_q(a_1q,Bq)=0$. We further check that
				\begin{equation*}
					g_q(a_1q,a_1q)=\frac{k}{\tau},\quad g_q(Bq,Bq)=\frac{4k(\tau-k)}{\tau^2},
				\end{equation*}
			and thus define $X$ and $Y$ to be the left-invariant extensions of $X_q:=\sqrt{\frac{\tau}{k}}a_1q$ and $Y_q:=\frac{\tau}{2\sqrt{k(\tau-k)}}Bq$, respectively. For their Lie bracket we obtain using $[a_1,B]=a_1$ that
				\begin{equation*}
					[X,Y]=\frac{\tau}{2\sqrt{k(\tau-k)}}X.
				\end{equation*}
			Using Koszul's formula, we find for the Levi-Civita connection $\nabla$ of $(\mathcal{H},g)$
				\begin{equation*}
					\nabla_X X= -\frac{\tau}{2\sqrt{k(\tau-k)}}Y,\quad \nabla_X Y= \frac{\tau}{2\sqrt{k(\tau-k)}}X,\quad \nabla_Y X= 0,\quad \nabla_Y Y= 0.
				\end{equation*}
			Hence, the non-vanishing terms of the Riemannian curvature tensor $R$ are determined by
				\begin{equation*}
					R(X,Y)X=\frac{\tau^2}{4k(\tau-k)}Y,\quad R(X,Y)Y=-\frac{\tau^2}{4k(\tau-k)}X,
				\end{equation*}
			and the scalar curvature of $(\mathcal{H},g)$ is given by
				\begin{equation*}
					S=-\frac{\tau^2}{2k(\tau-k)}.
				\end{equation*}
			Note that this result coincides with \cite[Prop.\,5.12]{L1} for the special case $\tau=3$, $k=1$. We further observe that the assignment $k\mapsto S$ for $1\leq k\leq \left\lfloor\frac{\tau}{2}\right\rfloor$ is injective, which follows from $\partial_k(k(\tau-k))>0$ for all $k\in\left(0,\left\lfloor\frac{\tau}{2}\right\rfloor\right)$. This also presents an alternative way to see that the polynomials $h$ in \eqref{eqn_h_affine_initial_coords_P0k_1} are inequivalent for different values of $k$.
			
			Lastly, we study the case $a_6$. The equation $[a_6,B]=a_6$ is equivalent to $B$ being of the form
				\begin{equation*}
					B = \left(\begin{matrix}
						B_{11} & -B_{12} & 0\\
						B_{12} & B_{11}-1 & 0\\
						0 & 0 & B_{33}
					\end{matrix}\right).
				\end{equation*}
			After adding $-B_{12}a_6$ to $B$, we might assume that $B_{12}=0$. If $B_{33}=0$, both the induced actions of $a_6$ and $B$ restrict to any plane orthogonal to $(0,0,1)^\mathrm{T}$, which contradicts the hyperbolicity of the considered polynomial. Hence, we can assume without loss of generality that $B$ is given by
				\begin{equation*}
					B=\left(\begin{matrix}
						b & 0 & 0\\
						0 & b-1 & 0\\
						0 & 0 & b+c
					\end{matrix}\right),\quad b,c\in\mathbb{R},
				\end{equation*}
			so that the set $\{b,b-1,b+c\}$ contains at least one negative and at least one positive number, which follows from Lemma \ref{lem_H_bounded_away_from_0}. When studying one-dimensional stabilisers, we have seen that $h$ being invariant under $a_6$ requires it to be of the form \eqref{eqn_polys_onedim_stabiliser}. We now calculate and find that $\D h_{\left(\begin{smallmatrix}x\\y\\z\end{smallmatrix}\right)}\left(B\cdot\left(\begin{smallmatrix}x\\y\\z\end{smallmatrix}\right)\right)=0$ is equivalent to
				\begin{equation*}
					c_{\ell}\left((b\tau+c(\tau-2\ell))x^2 + (b\tau-2\ell+c(\tau-2\ell))y^2\right)=0
				\end{equation*}
			for all $0\leq \ell \leq \left\lfloor\frac{\tau}{2}\right\rfloor$, where $c_\ell$ is used as in equation \eqref{eqn_polys_onedim_stabiliser}. Hence the only potentially non-zero coefficient $c_\ell$ is $c_0$. But $h=c_0z^\tau$ is not a hyperbolic polynomial. This excludes the generator $a_6$ and finishes the case of $\mathfrak{g}$ being non-abelian.
			
		\subsubsection{$\mathfrak{g}$ abelian}
			Now suppose that $\mathfrak{g}$ is abelian. For $B=(B_{ij})\in\mathfrak{gl}(3)$ we have to study $[a_i,B]=0$ for all $1\leq i\leq 6$ and then determine which of these pairs $(a_i,B)$ are linearly independent. In the following, $(x,y,z)$ denote linear coordinates on $\mathbb{R}^3$. We will first classify all two-dimensional abelian Lie subalgebras of $\mathfrak{gl}(3)$ that potentially act transitively on a special homogeneous surface, and in the second step classify the respective hyperbolic polynomials.
			
			For $a_1$, we obtain $[a_1,B]=0$ if and only if $B$ is of the form
				\begin{equation*}
					B=\left(\begin{matrix}
						B_{11} & B_{12} & B_{13}\\
						0 & B_{11} & B_{12}\\
						0 & 0 & B_{11}
					\end{matrix}\right).
				\end{equation*}
			Hence, $B$ has one eigenvalue, namely $B_{11}$. For $B_{11}\ne 0$, the origin in in the closure of every orbit of the induced action of $B$ and this case can thus be excluded by Lemma \ref{lem_H_bounded_away_from_0}. For $B_{11}=0$ and $B\ne0$, $a_1$ and $B$ being linearly independent implies that $B$ must be equivalent to $\left(\begin{smallmatrix} 0 & 1 & 0\\ 0 & 0 & 0\\ 0 & 0 & 0 \end{smallmatrix}\right)$. This case is excluded by Lemma \ref{lem_no_rays}. Hence, there exists no two-dimensional abelian Lie subalgebra of $\mathfrak{gl}(3)$ acting on a special homogeneous surface with one generator being $a_1$.
			
			For $a_2$, $[a_2,B]=0$ requires $B$ to be of the form
				\begin{equation}\label{eqn_B_form_a_2}
					B=\left(\begin{matrix}
						B_{11} & B_{12} & 0\\
						0 & B_{11} & 0\\
						0 & 0 & B_{33}
					\end{matrix}\right).
				\end{equation}
			We thus obtain three cases up to scale for $B$, depending on the sign of $B_{11}$ and $B_{12}$,
				\begin{equation*}
					b_{21}=\left(\begin{smallmatrix}
						1 & 0 & 0\\
						0 & 1 & 0\\
						0 & 0 & b
					\end{smallmatrix}\right),\ b< 0,\quad
					b_{22}=\left(\begin{smallmatrix}
						1 & 1 & 0\\
						0 & 1 & 0\\
						0 & 0 & b
					\end{smallmatrix}\right),\ b< 0,\quad
					b_{23}=\left(\begin{smallmatrix}
						0 & 1 & 0\\
						0 & 0 & 0\\
						0 & 0 & b
					\end{smallmatrix}\right),\ b\ne 0.
				\end{equation*}
			The respective conditions for $b$ are, again, implied by Lemmas \ref{lem_no_rays} and \ref{lem_H_bounded_away_from_0}. Independent of the choice for $b$ in $b_{23}$, that case can be excluded by noting that either $a_2$ and $b_{23}$ are linearly independent, or the studied hyperbolic polynomial must be invariant under $a_2-b_{23}=\left(\begin{smallmatrix} 0 & 0 & 0\\ 0 & 0 & 0\\ 0 & 0 & \mu-b\end{smallmatrix}\right)$, which contradicts Lemma \ref{lem_no_rays}. Furthermore, after replacing $b_{22}\to b_{22}-a_2$, we are left with only one case, namely $b_{21}$. This is a special case for the first generator being $a_4$ which we will keep in mind. Since $b<0$ in the definition of $b_{21}$, $h$ is invariant under $a_2-\frac{\mu}{b}b_{21}$, which is up to scale equivalent to one of
				\begin{equation}\label{eqn_a2_three_cases}
					\left(\begin{smallmatrix}
						1 & 0 & 0\\
						0 & 1 & 0\\
						0 & 0 & 0
					\end{smallmatrix}\right),\quad
					\left(\begin{smallmatrix}
						1 & 1 & 0\\
						0 & 1 & 0\\
						0 & 0 & 0
					\end{smallmatrix}\right),\quad
					\left(\begin{smallmatrix}
						0 & 1 & 0\\
						0 & 0 & 0\\
						0 & 0 & 0
					\end{smallmatrix}\right).
				\end{equation}
			But all of the above cases are in contradiction to Lemma \ref{lem_no_rays} or Lemma \ref{lem_H_bounded_away_from_0}. We conclude that there exists no two-dimensional abelian Lie subalgebra of $\mathfrak{gl}(3)$ acting on a special homogeneous surface with one generator being $a_2$.
			
			Next consider $a_3$. For $B$ to fulfil $[a_3,B]=0$, it must, as for $a_2$, be of the form \eqref{eqn_B_form_a_2}. With an analogous argument as for the case $a_2$ we obtain that, up to a linear transformation, the corresponding hyperbolic polynomial would be invariant under one of the matrices in \eqref{eqn_a2_three_cases}, contradicting Lemma \ref{lem_no_rays} or Lemma \ref{lem_H_bounded_away_from_0}. Hence, we can also exclude $a_3$ as the first generator for our desired abelian Lie subalgebra of $\mathfrak{gl}(3)$.
			
			Next, consider the generator $a_4$. Then $[a_4,B]=0$ is equivalent to $B$ being of the form
				\begin{equation*}
					B=\left(\begin{matrix}
						B_{11} & 0 & 0\\
						0 & B_{22} & 0\\
						0 & 0 & B_{33}
					\end{matrix}\right)
				\end{equation*}
			for $\lambda\in\mathbb{R}_{\geq 0}\setminus\{1\}$, and of the form
				\begin{equation*}
					B=\left(\begin{matrix}
						B_{11} & B_{12} & 0\\
						B_{21} & B_{22} & 0\\
						0 & 0 & B_{33}
					\end{matrix}\right)
				\end{equation*}
			for $\lambda=1$. We start with the case $\lambda\in\mathbb{R}_{\geq 0}\setminus\{1\}$. After changing $B\to B-B_{11}a_4$, rescaling, and using either Lemma \ref{lem_no_rays} or Lemma \ref{lem_H_bounded_away_from_0}, we might assume that $B$ is of the form
				\begin{equation*}
					B=\left(\begin{smallmatrix}
						0 & 0 & 0\\
						0 & 1 & 0\\
						0 & 0 & b
					\end{smallmatrix}\right),\ b<0.
				\end{equation*}
			Note this excludes the case of the generators being $a_4$ and $a_2$ as discussed above. For $\lambda\in\mathbb{R}_{\geq 0}\setminus\{1\}$ we might use alternative generators for the spanned abelian Lie subalgebra of $a_4$ and $B$ by replacing $a_4\to a_4-\lambda B$, namely $B$ and $A:=\left(\begin{smallmatrix} 1 & 0 & 0\\ 0 & 0 & 0\\ 0 & 0 & a\end{smallmatrix}\right)$ for some $a<0$. The latter condition comes, again, from Lemma \ref{lem_no_rays} and Lemma \ref{lem_H_bounded_away_from_0}. For $\lambda=0$, we can simply relabel $\mu=a$ and obtain the same set of generators.
			
			Next, consider $\lambda=1$. Since for $\lambda=1$, $a_4$ is invariant under the adjoint action of $\mathrm{GL}(2)\times\mathrm{Id}_{z}$ and under switching the variables $x$ and $y$ of the ambient space, we can, after rescaling, assume without loss of generality that $B$ is either diagonal, or of one the forms
				\begin{equation*}
					b_{41}=\left(\begin{smallmatrix}
						1 & 1 & 0\\
						0 & 1 & 0\\
						0 & 0 & b
					\end{smallmatrix}\right),\ b< 0,\quad
					b_{42}=\left(\begin{smallmatrix}
						0 & 1 & 0\\
						0 & 0 & 0\\
						0 & 0 & b
					\end{smallmatrix}\right),\ b\ne 0,\quad
					b_{43}=\left(\begin{smallmatrix}
						1 & -c & 0\\
						c & 1 & 0\\
						0 & 0 & b
					\end{smallmatrix}\right),\ c>0,\ b<0.
				\end{equation*}
			Note that we have again used Lemmas \ref{lem_no_rays} and \ref{lem_H_bounded_away_from_0} to obtain the respective ranges for $b$ and $c$ in the above equation. The diagonal case has already been covered in our study of the related $\lambda\in\mathbb{R}_{\geq 0}\setminus\{1\}$. Since, up to relabelling of the variables, $b_{41}-a_4=a_3$ for $\lambda=1$, and that case has already been shown not to admit an abelian Lie subalgebra fulfilling our hyperbolicity conditions, we can exclude the second generator being $b_{41}$. Similarly, the case $b_{42}$ can be excluded. For $b_{43}$, we can replace $b_{43}\to b_{43}-a_4$ and after rescaling obtain the generator $B:=\left(\begin{smallmatrix} 0 & -1 & 0\\ 1 & 0 & 0\\ 0 & 0 & b\end{smallmatrix}\right)$ with $b<0$. The abelian Lie subalgebra $\mathrm{span}\{a_4,B\}$ of $\mathfrak{gl}(3)$ is also a special case of the upcoming case with first generator being $a_5$.
			
			Suppose lastly that the first generator is either $a_5$ or $a_6$. For $[a_5,B]=0$, respectively $[a_6,B]=0$, to hold, the upper left $2\times2$-block of $B$ must correspond to a complex number, that is $B$ must be of the form
				\begin{equation*}
					B=\left(\begin{matrix}
						B_{11} & B_{12} & 0\\
						-B_{12} & B_{11} & 0\\
						0 & 0 & B_{33}
					\end{matrix}\right).
				\end{equation*}
			For $B_{12}=0$, it follows that $B_{11}\ne 0$ by either Lemma \ref{lem_no_rays} or Lemma \ref{lem_H_bounded_away_from_0}, and we are in one of the cases with first generator being $a_4$ studied above. For $B_{12}\ne 0$, we might replace $B\to B+B_{12}a_5$, respectively $B\to B+B_{12}a_6$, and are precisely in the latter case again.
			
			Summarising, we have shown the following. Up to isomorphism, there exist two two-parameter families of abelian Lie subalgebras of $\mathfrak{gl}(3)$ that at this point can not be excluded to act transitively on a special homogeneous surface, namely
				\begin{align}
					&\mathfrak{g}_1:=\mathrm{span}\left\{A_1,B_1\right\},\ A_1=\left(\begin{smallmatrix} 1 & 0 & 0\\ 0 & 0 & 0\\ 0 & 0 & a\end{smallmatrix}\right),\ B_1=\left(\begin{smallmatrix} 0 & 0 & 0\\ 0 & 1 & 0\\ 0 & 0 & b\end{smallmatrix}\right),\ a<0,\ b<0,\label{eqn_g_1}\\
					&\mathfrak{g}_2:=\mathrm{span}\left\{A_2,B_2\right\},\ A_2=\left(\begin{smallmatrix} 1 & 0 & 0\\ 0 & 1 & 0\\ 0 & 0 & a\end{smallmatrix}\right),\ B_2=\left(\begin{smallmatrix} 0 & -1 & 0\\ 1 & 0 & 0\\ 0 & 0 & b\end{smallmatrix}\right),\ a<0,\ b<0.\notag
				\end{align}
			For both families we now need to determine the values of $a$ and $b$, so that there exists a hyperbolic polynomial that is invariant under $\mathfrak{g}_1$ or $\mathfrak{g}_2$, respectively. We then need to classify all such hyperbolic polynomials up to equivalence.
			
			We start with $\mathfrak{g}_1$. Note that for all monomials $M$ in $(x,y,z)$, both $\D M \left(A_1\cdot\left(\begin{smallmatrix}x\\y\\z\end{smallmatrix}\right)\right)$ and $\D M \left(B_1\cdot\left(\begin{smallmatrix}x\\y\\z\end{smallmatrix}\right)\right)$ are contained in $\mathbb{R}\cdot M$. Hence, a homogeneous polynomial $h$ of degree $\tau\geq 3$ is invariant under $\mathfrak{g}_1$ if and only if all its monomial terms are. Let $M=m z^{\tau-k}x^{k-\ell}y^\ell$ for some $m\ne0$, $k\in\{0,\ldots,\tau\}$, $\ell\in\{0,\ldots,k\}$. We find that the monomial $M$ is invariant under $\mathfrak{g}_1$ if and only if
				\begin{equation}\label{eqn_g1_a_b}
					a(\tau-k)+k-\ell=0,\quad b(\tau-k)+\ell=0.
				\end{equation}
			The above two equations do not admit a solution with $k=0$, since then by assumption $\ell=0$ and consequently $a=b=0$, contradicting the definition of $\mathfrak{g}_1$. For $k=\tau$, the second equation implies $\ell=0$, thereby contradicting the first of the two equations. Hence, \eqref{eqn_g1_a_b} has a solution if and only $k\in\{1,\ldots,\tau-1\}$, which is given by
				\begin{equation}\label{eqn_g1_a_b_explicit}
					a=\frac{\ell-k}{\tau-k},\quad b=-\frac{\ell}{\tau-k}.
				\end{equation}
			Note that $a<0$ and $b<0$ imply that $\ell\in\{1,\ldots,k-1\}$, which allows us to further restrict the range of $k$ to $\{2,\ldots,\tau-1\}$. Assume that $a$ and $b$ are as above for a chosen triple of $(\tau,k,\ell)$. We will show that for $\tau$ fixed there exist no other such triple leading to the same values of $a$ and $b$. Suppose that
				\begin{equation}\label{eqn_g1_a_b_uniqueness}
					\frac{\ell-k}{\tau-k}=\frac{L-K}{\tau-K},\quad -\frac{\ell}{\tau-k}=-\frac{L}{\tau-K},
				\end{equation}
			for triples $(\tau,k,\ell),(\tau,K,L)\in\mathbb{N}_{\geq 3}\times\{2,\ldots,\tau-1\}\times\{1,\ldots,k-1\}$. By substituting the second equation in \eqref{eqn_g1_a_b_uniqueness} into the first, we obtain $-\frac{k}{\tau-k}=-\frac{K}{\tau-K}$ which is easily shown to hold if and only if $-\frac{\tau}{\tau-k}=-\frac{\tau}{\tau-K}$. Hence, $k=K$, and this implies $\ell=L$. Summarising, we have shown that a homogeneous polynomial $h$ is invariant under $\mathfrak{g}_1$ if and only if $a$ and $b$ fulfil \eqref{eqn_g1_a_b_explicit} and, up to positive rescaling, $h=\pm z^{\tau-k}x^{k-\ell}y^\ell$ for a choice of $(\tau,k,\ell)\in\mathbb{N}_{\geq 3}\times\{2,\ldots,\tau-1\}\times\{1,\ldots,k-1\}$. We can further restrict to
				\begin{equation*}
					h=z^{\tau-k}x^{k-\ell}y^\ell,
				\end{equation*}
			either by $(x,y,z)\to (-x,-y,-z)$, or by noting that if $\tau-k$, $k-\ell$, and $\ell$ are all even, $-z^{\tau-k}x^{k-\ell}y^\ell$ is non-positive, thus excluding hyperbolicity. We now need to study which of the polynomials of the form $h=z^{\tau-k}x^{k-\ell}y^\ell$ are hyperbolic. It will turn out that, in fact, all of them are. To show this it suffices that in each case, $p=(1,1,1)^\mathrm{T}$ is a hyperbolic point of $h$. The latter point it contained in $\{h=1\}$, and we further know by Euler's theorem for homogeneous functions that
				\begin{equation*}
					-\partial^2 h_{p}\left(p,p\right)=-\tau(\tau-1)<0.
				\end{equation*}
			Recall that $T_{p}\{h=1\}$ is orthogonal to $\mathbb{R}\cdot p$ with respect to $-\partial^2h_{p}$. Thus, in order to show that $p$ is a hyperbolic point it suffices to show that $\det\left(-\partial^2h_{p}\right)<0$ and that for some $v\in T_{p}\{h=1\}=\ker(\D h_p)$ we have $-\partial^2h_{p}(v,v)>0$. We check that
				\begin{equation*}
					\det\left(-\partial^2h_{p}\right)=-\ell(\tau-1)(\tau-k)(k-\ell),
				\end{equation*}
			which is negative for the allowed values of $\tau,k,\ell$.
			We further calculate $\D h_p=(k-\ell)\D x + \ell\D y + (\tau-k)\D z$, and choose $v=(0, \tau-k, -\ell)^\mathrm{T}$. We obtain
				\begin{equation*}
					-\partial^2 h_p(v,v)=\ell(\tau-k)(\tau-k+\ell)>0
				\end{equation*}
			as required. Hence, $h$ is indeed hyperbolic.
			
			It remains to describe the connected components of $\mathrm{hyp}_1(h)$ and the automorphism group $G^h$ for each allowed triple $(\tau,k,\ell)$. Note that $\{h=0\}=\{x=0\}\cup\{y=0\}\cup\{z=0\}$, meaning that $\{h\ne 0\}$ has eight connected components. For $\tau$ odd, either $1$ or $3$ out of the exponents $\tau-k,k-\ell,\ell$ must be odd. In any of these cases, we see that $\{h>0\}$ has four connected components and, in fact, coincides with $\mathrm{hyp}_1(h)$. The connected components are pairwise isometric via combinations of sign flips in the coordinates $x,y,z$. For $\tau$ even, either $0$ or $2$ out of the exponents $\tau-k,k-\ell,\ell$ must be odd. In the first case we find that $\mathrm{hyp}_1(h)$ has eight isometric connected components, and in the latter case $\mathrm{hyp}_1(h)$ again has four isometric connected components. In order to find the automorphism group $G^h$ of $h$ we need to find the stabiliser of $p$ and determine which of its elements are not obtained from combinations of sign flips in the coordinates $x,y,z$. To do so, we write
				\begin{equation}\label{eqn_h_abc_form}
					h=x^a y^b z^c,\quad a,b,c\in\mathbb{N},\quad a+b+c=\tau,\quad a\leq b\leq c.
				\end{equation}
			Every element $s$ of $\mathrm{stab}^h_p$ must preserve the union of the pairwise intersections of $\{x=0\}$, $\{y=0\}$, and $\{z=0\}$, since $s$ is in particular a continuous map. Since $s$ is linear, it can thus be written as the composition of a diagonal linear transformation and a permutation of coordinates. Since any variable switch maps $p$ to itself and $p$ is by assumption an eigenvector of $s$ with eigenvalue $1$, we find that $s$ must be, up to switching coordinates, the identity transformation. Thus, $\mathrm{stab}^h_p$ is either trivial if $a<b<c$, isomorphic to $\mathbb{Z}_2$ if $a\leq b<c$ or $a<b\leq c$, or isomorphic to the group of permutations of a set with three elements $\sigma_3$. Since no combinations of sign flips in $x,y,z$ that is not the identity preserves any connected component of $\mathrm{hyp}_1(h)$ and elements of $\mathfrak{g}_1$ are diagonal, we find that for $a,b,c$ even
				\begin{equation}\label{eqn_Gh_flat_case_even}
					G^h\cong\left\{\begin{tabular}{ll}
						$a<b<c$: & $\mathbb{R}^2\times\mathbb{Z}_2^3$,\\
						$a=b<c$ or $a<b=c$: & $(\mathbb{R}^2\times\mathbb{Z}_2^3)\ltimes\mathbb{Z}_2$,\\
						$a=b=c$: & $(\mathbb{R}^2\times\mathbb{Z}_2^3)\ltimes\sigma_3$,
					\end{tabular}\right.
				\end{equation}
			and for at least one of $a,b,c$ odd
				\begin{equation}\label{eqn_Gh_flat_case_odd}
					G^h\cong\left\{\begin{tabular}{ll}
						$a<b<c$: & $\mathbb{R}^2\times\mathbb{Z}_2^2$,\\
						$a=b<c$ or $a<b=c$: & $(\mathbb{R}^2\times\mathbb{Z}_2^2)\ltimes\mathbb{Z}_2$,\\
						$a=b=c$: & $(\mathbb{R}^2\times\mathbb{Z}_2^2)\ltimes\sigma_3$.
					\end{tabular}\right.
				\end{equation}
				
			It remains to show that the polynomials $h$ as in \eqref{eqn_h_abc_form} are pairwise inequivalent. Similarly to the calculation of the stabiliser of $p$ above, we note that any supposed transformation relating two such polynomials must be compositions of switching variables and a diagonal transformation. The condition $a\leq b\leq c$ thus excludes that two distinct polynomials of the form \eqref{eqn_h_abc_form} are equivalent.
			
			Next consider $\mathfrak{g}_2$ and suppose that a hyperbolic polynomial $h$ is invariant under it. Then $h$ is in particular invariant under $C:=B_2-\frac{b}{a}A_2$, which is of the form
				\begin{equation*}
					C=\left(\begin{smallmatrix}
						c & -1 & 0\\
						1 & c & 0\\
						0 & 0 & 0
					\end{smallmatrix}\right),\quad c<0.
				\end{equation*}
			Hyperbolicity is an open condition and, hence, there exists a hyperbolic point $p$ of $h$ that is not contained in $\mathrm{span}\left\{\left(\begin{smallmatrix}0\\0\\1\end{smallmatrix}\right)\right\}$, and $h$ is constant along $f(t):=e^{tC} p$, $t\in\mathbb{R}$. Since $c\ne 0$, it follows that $f$ is not periodic. The image of $f$ is contained in the affine plane $E:=p+\mathrm{span}\left\{\left(\begin{smallmatrix}1\\0\\0\end{smallmatrix}\right),\left(\begin{smallmatrix}0\\1\\0\end{smallmatrix}\right)\right\}$, and every line $L$ therein intersects the image of $f$ in countably infinitely many distinct points. Hence, by being a polynomial, $h$ must be constant along $L$, which is a contradiction to Lemma \ref{lem_no_rays}. We conclude that there exists no hyperbolic polynomial $h$ that is invariant under $\mathfrak{g}_2$.
			
			Next, we show that the scalar curvature of every special homogeneous surfaces $\mathcal{H}$ contained in the level set of $h=x^ay^bz^c$ as in \eqref{eqn_h_abc_form} with respect to the centro-affine fundamental form $g$ vanishes. Using a right-invariant, with respect to $g$ orthonormal, frame $X_1,X_2$ of $T\mathcal{H}$, we obtain from $\mathfrak{g}_1$ \eqref{eqn_g_1} being abelian and Koszul's formula that $\nabla^g_{X_i}X_j=0$ for all $i,j\in\{1,2\}$. Hence, the curvature of $(\mathcal{H},g)$ vanishes. Alternatively, we can consider a left-invariant orthonormal co-frame $\theta=\left(\begin{smallmatrix}e_1\\ e_2\end{smallmatrix}\right)$ of $T^*\mathcal{H}$, so that
				\begin{equation*}
					0=\!\D \theta=-\omega_{\mathrm{LC}}\wedge\theta,
				\end{equation*}
			where $\omega_{\mathrm{LC}}$ denotes the connection form of the Levi-Civita connection $\nabla^g$. The above equation follows from $\mathfrak{g_1}$ being abelian and, hence, $\D e_i(X,Y)=-e_i\left([X,Y]\right)=0$ for $i\in\{1,2\}$ and all $X,Y\in\mathfrak{X}(\mathcal{H})$. Thus, $\omega_{\mathrm{LC}}=0$, showing that the curvature form $\Omega_{\mathrm{LC}}=\D \omega_{\mathrm{LC}}+\omega_{\mathrm{LC}}\wedge\omega_{\mathrm{LC}}$ must also vanish identically.
			
			This finishes the proof of Theorem \ref{thm_main} and Proposition \ref{prop_scalar_curvature}.\qed
			
\subsection{Special homogeneous surfaces are singular at infinity}
	\begin{proof}[Proof of Proposition \ref{prop_shs_sing_at_inf}:]
		All connected components of the special homogeneous surfaces listed in Theorem \ref{thm_main} are isomorphic. Hence, it suffices to prove Proposition \ref{prop_shs_sing_at_inf} for any chosen connected component in each case.
		
		For Thm. \ref{thm_main} \eqref{thm_main_i}, $\D h$ vanishes identically on $\{h=0\}$. Hence, every connected component of $\mathrm{hyp}_1(h)$ is singular at infinity.
		
		The boundary of one of the connected components of $\mathrm{hyp}(h)$ in Thm. \ref{thm_main} \eqref{thm_main_ii} contains the ray $L=\mathbb{R}_{>0}\cdot(1,0,-1)^\mathrm{T}$. For any choice of $k$ with $1\leq k<\frac{\tau}{2}$ we find that $\D h$ vanishes identically along $L$. Hence, any connected component of $\mathrm{hyp}_1(h)$ is singular at infinity.
		
		Lastly, observe that in any case listed in Thm. \ref{thm_main} \eqref{thm_main_iii}, one connected component of $\mathrm{hyp}(h)$ is given by $\{x>0,\,y>0,\,z>0\}$, and its boundary contains the positive rays in the coordinate axis. We now check that that e.g. along $\{x>0,\,y=0,\,z=0\}$, $\D h$ vanishes identically. This shows that also in this case, every special homogeneous surface is singular at infinity.
	\end{proof}

\end{document}